\numberwithin{equation}{section}
\numberwithin{figure}{section}
\newtheorem{theorem}{Theorem}[section]
\newtheorem{lemma}[theorem]{Lemma}
\newtheorem{proposition}[theorem]{Proposition}
\theoremstyle{definition}
\newtheorem{definition}[theorem]{Definition}
\newtheorem{remark}[theorem]{Remark}
\newcommand{\C}{{\mathbb{C}}}
\newcommand{\Z}{{\mathbb{Z}}}
\newcommand{\Q}{{\mathbb{Q}}}
\newcommand{\R}{{\mathbb{R}}}
\renewcommand{\P}{{\mathbb{P}}}
\renewcommand{\t}{\mathfrak{t}}
\newcommand{\into}{\hookrightarrow}
\renewcommand{\mod}{/\!/}
\DeclareMathOperator{\Stab}{Stab}
\DeclareMathOperator{\Crit}{Crit}
\DeclareMathOperator{\pt}{pt}
\DeclareMathOperator{\lcm}{lcm}
\newcommand{\iNK}{i_{\mathrm{NK}}}
\newcommand{\NKd}{NK^{\diamond}_{T}}
\newcommand{\okf}{\mathsf{K}_{\mathrm{orb}}}
\newcommand{\ec}{\overline{e}} 
\newcommand{\kt}{\odot} 
\newcommand{\Mt}{\widetilde{M}}
\newcommand{\Kernel}{{\mathcal K}}
\begin{document}

\title{The full orbifold $K$-theory of abelian symplectic quotients}

\author{Rebecca Goldin}
\address{Mathematical Sciences MS 3F2, George Mason University, 4400 University Drive, Fairfax, Virginia 22030,
USA} \email{rgoldin@math.gmu.edu} \urladdr{\tt{http://math.gmu.edu/~rgoldin/}}

\author{Megumi Harada}
\address{Department of Mathematics and
Statistics, McMaster University, 1280 Main Street West, Hamilton,
Ontario L8S4K1, Canada}
\email{Megumi.Harada@math.mcmaster.ca}
\urladdr{\tt{http://www.math.mcmaster.ca/{}Megumi.Harada/}}

\author{Tara S. Holm}
\address{Department of Mathematics, 310 Malott Hall, Cornell
  University, Ithaca, New York 14853-4201, USA}
\email{tsh@math.cornell.edu}
\urladdr{\tt{http://www.math.cornell.edu/~tsh/}}

\author{Takashi Kimura}
\address{Department of Mathematics and Statistics, Boston University,
  111 Cummington Street, Boston, Massachusetts 02215, USA}
\email{kimura@math.bu.edu}
\urladdr{\tt{http://math.bu.edu/people/kimura/}}

\keywords{full orbifold $K$-theory, inertial $K$-theory, Hamiltonian $T$-space, symplectic quotient}
\subjclass[2000]{Primary: 19L47; Secondary: 53D20}

\date{\today}


\begin{abstract}
In their 2007 paper, Jarvis, Kaufmann, and Kimura defined the {\bf
  full orbifold $K$-theory} of an orbifold ${\mathfrak X}$, analogous
  to the Chen-Ruan orbifold cohomology of ${\mathfrak X}$ in that it
  uses the {\bf obstruction bundle} as a quantum correction to the
  multiplicative structure. We give an explicit algorithm for the
  computation of this orbifold invariant in the case when ${\mathfrak
  X}$ arises as an abelian symplectic quotient. Our methods are integral
  $K$-theoretic analogues of those used in the orbifold cohomology
  case by Goldin, Holm, and Knutson in 2005. We rely on the
  $K$-theoretic Kirwan surjectivity methods developed by Harada and
  Landweber. 
As a worked class of examples, we compute the full orbifold $K$-theory of weighted projective
spaces that occur as a symplectic quotient of a complex affine space by a circle.
  Our computations hold over the integers, and in the
  particular case of
these weighted projective spaces, we show that the
  associated invariant is torsion-free.

\end{abstract}

\maketitle

\section{Introduction}\label{sec:intro}

Orbifolds and their invariants, including homotopy groups, cohomology rings, and $K$-theory rings, are an active area of current research.  Much recent work concerns stringy versions of these invariants (which take into account the so-called twisted sectors), motivated by the seminal work of Dixon, Harvey, Vafa, and Witten \cite{DHVW85}.  Examples of such invariants are the {\bf orbifold cohomology} of Chen and Ruan \cite{CR04} and the {\bf full orbifold $K$-theory} introduced by Jarvis, Kaufmann, and Kimura \cite{JarKauKim07} and further developed by Becerra and Uribe \cite{BecUri07}.

The main result of this manuscript is the complete description of the full orbifold
$K$-theory of
abelian symplectic quotients, using techniques from equivariant symplectic
geometry.  The examples to which these methods apply include many
orbifold toric varieties
(or smooth toric Deligne-Mumford stacks) as discussed in e.g.\ \cite{BCS05, BorHor06, GHK05, Iwa07,
JiaTse07}, 
including weighted projective spaces, a topic of active
current research \cite{BahFraRay09, BoiManPer06, CoaCorLeeTse06, GueSak08, Hol07a, Tym08}. Another
class of examples are the orbifold weight varieties of \cite{Knu96}
and \cite[Section 8]{GHK05}.

We introduce a new ring, called {\bf inertial $K$-theory},
associated
to a $T$-action on a manifold $M$, where $T$ is a compact abelian Lie
group and $M$ is a stably 
complex manifold. This ring
generalizes the
{\bf stringy $K$-theory} 
defined by Becerra and Uribe \cite[Definition 2.1]{BecUri07}, which
applies to a 
{locally free} $T$-action on $M$.%
\footnote{What the authors in \cite{BecUri07} 
call 'stringy $K$-theory' is analogous to the full
 orbifold $K$-theory of \cite{JarKauKim07} in the case that the
 orbifold is a global quotient by an abelian Lie group.} In contrast, 
the definition of inertial
 $K$-theory does not require that $T$ act locally freely. An important
 special case is when $X$ is a Hamiltonian $T$-space. In this setting, the restriction map to 
the $T$-fixed
set allows us to simplify the product for the purposes of computation. 
We then use an analogue of the {\bf Kirwan surjectivity theorem} from
equivariant symplectic geometry to prove that the inertial $K$-theory
surjects onto (an integral lift of) the full orbifold $K$-theory of
\cite{JarKauKim07}.

We take a moment to discuss other versions of $K$-theory for orbifolds
discussed in the literature. 
In \cite{JarKauKim07}, the authors also introduce the {\bf stringy $K$-theory} 
$\mathcal{K}(X,G)$ associated to a smooth projective
variety $X$ with an action by a finite group $G$.
\footnote{The stringy $K$-theory 
of \cite{JarKauKim07} is defined only
  when $G$ is finite, and differs from the stringy $K$-theory 
  of Becerra and Uribe \cite{BecUri07}.}
In this case, the
$G$-invariant part of $\mathcal{K}(X,G)$, which in \cite{JarKauKim07}
is called the {\bf small orbifold $K$-theory}, is isomorphic as a
vector space, but not as a ring, to the orbifold
$K$-theory of the global quotient ${\mathfrak X} = [X/G]$ as defined
by Adem and Ruan \cite{AdeRua03}.  In the setting of a global quotient
by a finite group, the full orbifold $K$-theory of \cite{JarKauKim07} contains the small
orbifold $K$-theory $\mathcal{K}(X,G)^G$ as a subring. However, the
full orbifold $K$-theory is far more general; in particular, it may be
defined for stacks which are not global quotients.

Our definition of inertial $K$-theory $\NKd(M)$ follows
ideas 
introduced by Goldin, Holm, and Knutson \cite{GHK05} in the setting of
cohomology.
The ring $\NKd(M)$ is well defined for any stably complex $T$-space $M$.
In the case when the $T$-action is locally free,
$\NKd(M)$ is the stringy $K$-theory of Becerra and Uribe and
in particular is isomorphic to  (an integral lift of)
the full orbifold $K$-theory of
the associated orbifold \cite[Section 2]{BecUri07}.

When the
$T$-action is not locally free, then as far as we are aware, the
inertial $K$-theory ring $\NKd(M)$ is a new ring which has not
appeared previously in the literature and 
does not correspond to the $K$-theory of a stack. The point of
introducing 
$\NKd(M)$ is that we can use it along with integral $K$-theoretic analogues \cite{HarLan07,
HL-kernel} of Kirwan surjectivity to compute 
the full orbifold $K$-theory of orbifolds
${\mathfrak X}$ arising as abelian symplectic quotients. For example, orbifold toric varieties (as
studied e.g. in \cite{LT97}) arise in this manner via the Delzant
construction.

Our computations depend on an inertial-$K$-theory analogue of standard
localization results in equivariant symplectic geometry. 
Specifically, when $M$ is a Hamiltonian $T$-space, the stringy product on $\NKd(M)$ may be reformulated using the $T$-fixed point sets and their normal bundles, simplifying computations.
This is called the $\star$-product, and mimics a
similar product in orbifold cohomology as in \cite{GHK05}. 
Our main theorem, proven in Section~\ref{sec:surjectivity}, is the following.

\begin{theorem}\label{th:main}
Let $T$ be a compact connected abelian Lie group, and $(M, \omega, \Phi)$ a Hamiltonian
$T$-space with proper moment map \(\Phi: M \to \t^*.\) Assume that
\(\alpha \in \t^*\) is a regular value of $\Phi$, so that $T$ acts locally freely
on $\Phi^{-1}(\alpha)$.
Then the inclusion \(\iota:
\Phi^{-1}(\alpha) \into M,\) induces a {\bf ring homomorphism} in
inertial $K$-theory:
\begin{equation}\label{eq:kappa-in-intro}
\xymatrix{
\kappa: \NKd(M) \ar[r]^{\iota^*} & \NKd(\Phi^{-1}(\alpha))
\ar[r]^<<<<<{\cong} &
\okf([\Phi^{-1}(\alpha)/T]) =:  \okf([M \mod_{\! \alpha} T])
}
\end{equation}
from the inertial $K$-theory $\NKd(M)$ of $M$ onto the integral
full orbifold $K$-theory $\okf([M\mod_{\! \alpha} T])$
of the quotient orbifold $[M \mod_{\! \alpha} T] := [\Phi^{-1}(\alpha)/T]$.
Furthermore, this map
is {\bf surjective}.
\end{theorem}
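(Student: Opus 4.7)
The plan is to prove the theorem by decomposing $\kappa$ into its sectors and analyzing each separately. By construction, the underlying module of $\NKd$ decomposes as a direct sum over the sectors, which (since $T$ is abelian) are indexed by elements $t \in T$ with $M^t$ nonempty. The factorization of $\kappa$ given in the statement already reduces us to showing two things: that $\iota^*$ is a ring homomorphism, and that $\iota^*$ is surjective. The identification $\NKd(\Phi^{-1}(\alpha)) \cong \okf([\Phi^{-1}(\alpha)/T])$ is immediate from the Becerra--Uribe identification recalled in the introduction, since $T$ acts locally freely on $\Phi^{-1}(\alpha)$.

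\textbf{Step 1: Ring homomorphism.} I would unpack the definition of the inertial product on $\NKd(M)$, which on sectors indexed by $t_1, t_2$ uses restriction to the triple intersection $M^{t_1} \cap M^{t_2} \cap M^{t_1 t_2}$, multiplication in $K_T$ twisted by an obstruction bundle class, and pushforward to $M^{t_1 t_2}$. The key geometric observation is that restriction along $\iota: \Phi^{-1}(\alpha) \into M$ is compatible with all of these ingredients: we have $(\Phi^{-1}(\alpha))^t = \iota^{-1}(M^t)$ for every $t \in T$, and the obstruction bundles over $\Phi^{-1}(\alpha)$ are the $\iota^*$-pullbacks of those over $M$. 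Functoriality of pullback and the base-change formula for pushforward in equivariant $K$-theory then make the ring homomorphism property a formal diagram chase.

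\textbf{Step 2: Surjectivity.} Because $T$ is abelian, each fixed locus $M^t$ is a $T$-invariant symplectic submanifold of $M$ on which $\Phi|_{M^t}$ is a moment map. The restricted moment map is proper (as $\Phi$ is proper and $M^t$ is closed), and $\alpha$ remains a regular value of $\Phi|_{M^t}$ because $T$ already acts locally freely on $\Phi^{-1}(\alpha) \cap M^t \subseteq \Phi^{-1}(\alpha)$. The $K$-theoretic Kirwan surjectivity theorem of Harada--Landweber therefore yields, for each sector $t$, a surjection
$$\iota_t^*: K_T(M^t) \onto K_T\bigl(\Phi^{-1}(\alpha) \cap M^t\bigr) = K_T\bigl((\Phi^{-1}(\alpha))^t\bigr).$$
Assembling these across sectors gives surjectivity of $\iota^*$ on the underlying modules, and combined with Step 1 this proves the theorem.

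The main obstacle is Step 1: Step 2 is essentially automatic once Harada--Landweber is invoked sector by sector and the regularity of $\alpha$ on each $M^t$ is checked. The substantive content lies in tracking the inertial product through restriction, in particular verifying the naturality of the obstruction bundles and the compatibility of the pushforward from the triple intersection to the $t_1 t_2$-sector with $\iota^*$. Once these compatibilities are established, the remaining arguments are formal.
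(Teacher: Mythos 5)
Your overall strategy coincides with the paper's: the identification $\NKd(\Phi^{-1}(\alpha))\cong\okf([\Phi^{-1}(\alpha)/T])$ via Becerra--Uribe, sector-by-sector application of the Harada--Landweber $K$-theoretic Kirwan surjectivity to the Hamiltonian $T$-spaces $(M^t,\omega|_{M^t},\Phi|_{M^t})$ for surjectivity, and compatibility of the twisted product with restriction for the ring-homomorphism property. Your Step 2 is complete and matches the paper's argument (including the observation that $\alpha$ remains a regular value of $\Phi|_{M^t}$ because stabilizers along $\Phi^{-1}(\alpha)\cap M^t$ are finite).

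The gap is in Step 1. You assert that ``the obstruction bundles over $\Phi^{-1}(\alpha)$ are the $\iota^*$-pullbacks of those over $M$'' and that base change for the pushforward $(\ec_3)_!$ holds, treating these as formal consequences of $(\Phi^{-1}(\alpha))^t=\iota^{-1}(M^t)$. That set-theoretic equality is trivially true but insufficient: both claims require that the normal bundle $\nu\bigl(M^{t_1,t_2},M^{t_1t_2}\bigr)$ (and more generally $\nu(V,M)$ for components $V$ of fixed sets, whose isotypic decomposition defines the obstruction bundle) restrict along $\iota$ to the corresponding normal bundles inside $\Phi^{-1}(\alpha)$. This is precisely where a general $T$-equivariant map fails to induce a ring homomorphism on inertial (co)homology --- the paper explicitly flags this failure --- and the missing geometric input is that $\Phi^{-1}(\alpha)$ is transverse to every $Y^t$, $t\in T$ (the paper's Lemma~\ref{lemma:level-transverse}, a nontrivial fact about regular level sets of abelian moment maps, quoted from \cite[Theorem 6.4]{GHK05}). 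With transversality in hand, your ``formal diagram chase'' is exactly the paper's Proposition~\ref{prop:ring-map}: the Thom-class/pushforward compatibility reduces to the restriction of normal bundles. You correctly located the substantive content of the proof in these compatibilities, but you did not supply the transversality statement that establishes them, so as written Step 1 is an outline of what must be proved rather than a proof.
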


We summarize the steps for the proof of this theorem and its use
in effective computations.
The key tool is the ring
$\NKd(M)$ of the
original Hamiltonian $T$-space $M$.
As a vector space $\NKd(M) = \bigoplus_{t\in T} K_T(M^t)$, where $M^t$
consists of fixed points of the $t$ action on $M$, $t\in T$.
For each $t$, $K_T(M^t)$ may be computed  using well-known methods in
equivariant topology (see e.g. \cite{GKM, HHH05}).
We may also apply
the ordinary $K$-theoretic Kirwan surjectivity theorem,
which states that the map $\kappa_t$ induced by inclusion from
$K^*_T(M^t)$ to $K^*_T(\Phi^{-1}(0)
\cap M^t)$ is a surjection \cite{HarLan07}.
However, the ring structure
on $\NKd(M)$ is not the obvious one on $\bigoplus_{t\in T}K_T(M^t)$.
Thus the main technical
challenge,
as was the case in \cite{GHK05}, is to prove that the 
Kirwan map $\kappa$~\eqref{eq:kappa-in-intro} is indeed a
ring homomorphism.

An additional benefit of our point of view is that $\NKd(M)$ surjects
onto the full orbifold $K$-theory of any of its orbifold symplectic
quotients (at any regular value $\alpha$). This is because the isotropy information for every orbifold symplectic quotient $[M/\!/_{\alpha} T]$ is contained in
the ring $\NKd(M)$ when $M$ is a Hamiltonian $T$-space. 
As an illustration of our surjectivity theorem, in Section~\ref{sec:examples} we
calculate the orbifold cohomology of 
those
weighted projective spaces 
that occur as a symplectic quotient of $\C^n$ by a linear $S^1$ action.
We will discuss symplectic toric orbifolds in greater detail in a subsequent paper.

\medskip
\noindent{\bf Acknowledgements.} The authors thank Dan Dugger and Gregory D. Landweber for
many helpful conversations. The first author was supported in part by an NSF Grant
DMS-0606869, the Ruth Michler Award of the AWM, and a George Mason University Provost's Seed Grant.
The second author was supported in part by an NSERC Discovery Grant, an NSERC University Faculty
Award, and an Ontario Early Researcher Award. The third author was supported by NSF Grant
DMS-0835507, and NSF Advance funds from Cornell University.

\section{The inertial $K$-theory of a stably 
complex $T$-space}\label{sec:definition}

Throughout, $T$ will denote a compact connected abelian Lie group (i.e.\ a compact torus).
In this section, we define a new ring, the {\bf inertial $K$-theory}, associated to a stably almost
complex $T$-manifold $M$. The definition is similar in spirit to that of the inertial cohomology
associated to $M$ as in \cite{GHK05}. When $M$ is a Hamiltonian $T$-space, this inertial $K$-theory
ring gives rise to a surjective ring homomorphism onto the full orbifold $K$-theory defined by
\cite{JarKauKim07} of the symplectic quotient $M \mod_{\! \alpha} T$.

\subsection{The definition of inertial $K$-theory}
We begin by defining the inertial K-theory additively as a $K_T$-module. Suppose $M$ is a stably
complex $T$-space in the sense of \cite{GHK05}. Since $T$ is abelian, each  $M^t$ is a
$T$-space for $t\in T$.
\begin{definition}\label{def:inertial-Kthy}
The \textbf{inertial K-theory $NK^{\diamond}_T(M)$ of a stably 
complex $T$-space $M$} is
defined, as a $K_T(\pt)$-module, as
\begin{equation}\label{eq:def-inertial}
\NKd(M) := \bigoplus_{t \in T} K_T(M^t).
\end{equation}
\end{definition}
The grading $\diamond$ is with respect to group elements of $T$; as we will show in
Definition~\ref{def:productNK_T}, the product of a homogeneous $t_1$-class and a homogeneous
$t_2$-class is a homogeneous $(t_1\cdot t_2)$-class.  Here $K_T(X)$ denotes the integral
$T$-equivariant $K$-theory of Atiyah and Segal \cite{Seg68}.
In the case when $T$ acts with finite stabilizers, $\NKd(M)$ coincides
with \cite[Definition 2.1]{BecUri07} and with the full orbifold
$K$-theory of \cite{JarKauKim07} by \cite[Section 2]{BecUri07}.

We now proceed with the definition of the product on $\NKd(M)$ which
follows that of \cite{BecUri07} and \cite{GHK05}.
We begin with some observations about the normal bundle to the fixed point set $M^H$ for a fixed
subgroup $H\subset T$; these lead to a simple description (using {\bf logweights}) of the
{\bf obstruction bundle} with respect to which we twist the product. Let $V$ be a connected component of $M^H$ and let \(p \in V\). The linear
action by $H$ on each fiber $T_pM \oplus \tau$ (where $\tau$ is the trivial stabilization in the stably complex structure) is the identity on precisely $T_pV \oplus \tau$,
so there is an induced linear action of $H$ on the normal bundle \(\nu(V, M) := TM/TV \cong
\left(TM \oplus \tau\right)/\left(TV \oplus \tau\right)\) over $V\subset M^H$. Since $H$ preserves
the almost complex structure on $TM \oplus \tau$, it follows that $\nu(V, M)$ is a complex vector
bundle. The (complex) rank of $\nu(V,M)$ may vary as $V$ varies over the components of $M^H$.

We may write
\begin{equation}\label{eq:normal-bundle-t-decomp}
\nu(V, M) \cong \bigoplus_{\alpha\in\widehat{H}} \nu(V, M)_{\alpha},
\end{equation}
where $\hat{H}$ denotes the character group of $H$ and $\nu(V,
M)_{\alpha}$ is the subbundle of $\nu(V,M)$ on which $H$ acts by
weight $\alpha$. Each element $t\in H$ acts on a fiber
$\nu_p(V,M)_\alpha$ with eigenvalue $exp(2\pi i a_\alpha(t))$,
for a constant
$a_\alpha(t)\in [0,1)$ called the {\bf logweight} of $t$ on
$\nu(V,M)_\alpha$.

Now let
\[
\Mt := \bigsqcup_{(t_1,t_2)\in T \times T} M^{ t_1,t_2 }
\]
where $M^{ t_1,t_2 }$ is the submanifold of $M$ consisting of points fixed
simultaneously by $t_1$ and $t_2$.
For each pair $(t_1,t_2)\in T\times T$, let
$\mathcal{O}_{t_1,t_2}$ be the union over all connected components $V$ of
$M^{t_1, t_2}$ of $\mathcal{O}_{t_1,t_2}|_V$, where
\begin{equation}\label{eq:def-obstruction}
\mathcal{O}_{t_1,t_2}\vert_V := \bigoplus_{a_\alpha(t_1)+a_\alpha(t_2)+a_\alpha((t_1t_2)^{-1}) = 2}
\nu(V, M)_{\alpha}
\end{equation}
is the complex subbundle of $\nu(V,M)$ given by the component on which
the sum of the logweights of $t_1, t_2, (t_1t_2)^{-1}$ is 2.

\begin{definition}\label{def:obstruction}
The \textbf{obstruction bundle} $\mathcal{O}\to \Mt$ is the (disjoint) union of the bundles
$\mathcal{O}_{t_1, t_2}$ in~\eqref{eq:def-obstruction} for all pairs $t_1, t_2 \in T$, i.e.
$$
\mathcal{O} := \bigsqcup_{\stackrel{(t_1,t_2)\in T\times T}{V\ c.c.\ M^{t_1,t_2}}} \mathcal{O}_{t_1,t_2}\vert_V = \bigsqcup_{(t_1,t_2)\in T\times T} \mathcal{O}_{t_1,t_2},
$$
where ``c.c.'' denotes ``connected component of''.

Let $\epsilon(\mathcal{O}_{t_1,t_2}|_V):= \lambda_{-1}(\mathcal{O}_{t_1,t_2}|_V^*)$ denote the
$T$-equivariant $K$-theoretic Euler class of this bundle $\mathcal{O}_{t_1,t_2}|_V\to V$. We define
the \textbf{virtual fundamental class of $\Mt$} to be the sum
$$
\epsilon(\mathcal{O}) := \sum_{\stackrel{(t_1,t_2)\in T\times T}{V\ c.c.\ M^{t_1,t_2}}} \epsilon(\mathcal{O}_{t_1,t_2}|_V).
$$

\end{definition}

We may now define the product on $\NKd(M)$. By extending linearly, it clearly suffices to define the
product $b_1 \kt b_2$ of two { homogeneous} classes \(b_1 \in NK^{t_1}_T(M) = K_T(M^{t_1}), b_2 \in
NK^{t_2}_T(M) = K_T(M^{t_2}).\) Let $e_j: M^{t_1, t_2} \into M^{t_j}$ denote
the canonical $T$-equivariant inclusion map for any $t_1,t_2 \in T$ and \(j = 1,2,\) and let
$\ec_3: M^{ t_1,t_2}\into M^{t_1t_2}$ denote the inclusion map into points fixed by the product
$t_1t_2$.

\begin{definition}\label{def:productNK_T}
Let $M$ be a stably 
complex $T$-manifold and $\NKd(M)$ its inertial K-theory. Let \(t_1, t_2
\in T.\) The $\kt$ \textbf{product on the inertial K-theory $\NKd(M)$} is defined, for $b_1 \in
NK_T^{t_1}(M)$ and $b_2 \in NK_T^{t_2}(M)$, by
\begin{equation}\label{eq:ktproduct}
b_1\kt b_2 = \ec_{3, !}( e_1^* b_1\cdot e_2^* b_2\cdot \epsilon(\mathcal{O}))
\end{equation}
where $\cdot$ denotes the usual product in the $T$-equivariant $K$-theory $K_T(M^{t_1, t_2})$. By
extending linearly, $\kt$ is defined on all of $\NKd(M)$.
\end{definition}
Note that for $b_1\in NK_T^{t_1}(M)$ and $b_2\in NK_T^{t_2}(M)$, the product $b_1\kt b_2\in
NK_T^{t_1t_2}(M)$ is by definition a homogeneous class in the $t_1 t_2$-summand of $\NKd(M)$.

It is straightforward to show that $\NKd(M)$ is a $K_T(\pt)$-algebra.

\begin{proposition}\label{prop:NKring}
Let $M$ be a stably 
complex $T$-manifold. Then $\NKd(M)$ is a
commutative, associative, unital algebra over the ground ring
$K_T(\pt)$ with the multiplication $\kt$ of Definition~\ref{def:productNK_T}.
\end{proposition}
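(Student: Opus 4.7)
The plan is to check commutativity, the existence of a unit, and $K_T(\pt)$-linearity first (these are essentially formal), and then to reduce the proof of associativity to a symmetric push-pull expression over the three-fold fixed locus
\[
M^{t_1,t_2,t_3} := M^{t_1}\cap M^{t_2}\cap M^{t_3}.
\]
Commutativity of $\kt$ will follow immediately from commutativity of the ordinary product on $K_T(M^{t_1,t_2})$, together with the observation that the logweight condition $a_\alpha(t_1)+a_\alpha(t_2)+a_\alpha((t_1t_2)^{-1})=2$ defining $\mathcal{O}_{t_1,t_2}|_V$ in~\eqref{eq:def-obstruction} is invariant under the swap $t_1\leftrightarrow t_2$ (this uses that $T$ is abelian, so $(t_1t_2)^{-1}=(t_2t_1)^{-1}$). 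The $K_T(\pt)$-linearity is inherited from the $K_T(\pt)$-linearity of pullback and pushforward, together with $K_T(\pt)$-bilinearity of the ordinary product. For the unit I would take $1\in K_T(M)=NK_T^e(M)$; setting $t_1=e$ in~\eqref{eq:def-obstruction}, all logweights $a_\alpha(e)$ vanish and $a_\alpha(t)+a_\alpha(t^{-1})\in\{0,1\}$, so $\mathcal{O}_{e,t}$ has rank zero and $\epsilon(\mathcal{O}_{e,t})=1$; since $e_2$ and $\ec_3$ are then both the identity on $M^t$, Definition~\ref{def:productNK_T} reduces to $1\kt b=b$.

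For associativity, the strategy is to compute both $(b_1\kt b_2)\kt b_3$ and $b_1\kt (b_2\kt b_3)$ as a single push-forward from $M^{t_1,t_2,t_3}$ to $M^{t_1t_2t_3}$ against a symmetric ``three-fold obstruction class.'' Concretely, let $f_j:M^{t_1,t_2,t_3}\hookrightarrow M^{t_j}$ be the inclusions, and $\ec:M^{t_1,t_2,t_3}\hookrightarrow M^{t_1t_2t_3}$ the inclusion into the fixed set of the product. Applying Definition~\ref{def:productNK_T} twice, and using the base change identity for the fibre square
\[
\xymatrix{
M^{t_1,t_2,t_3}\ar@{^{(}->}[r]\ar@{^{(}->}[d] & M^{t_1t_2,t_3}\ar@{^{(}->}[d] \\
M^{t_1,t_2}\ar@{^{(}->}[r] & M^{t_1t_2}
}
\]
together with the projection formula in equivariant $K$-theory, one can rewrite $(b_1\kt b_2)\kt b_3$ as
\[
\ec_{!}\bigl(f_1^* b_1\cdot f_2^* b_2\cdot f_3^* b_3\cdot R_L(t_1,t_2,t_3)\bigr),
\]
where $R_L$ is an explicit $K$-theoretic class built from the pullbacks of $\epsilon(\mathcal{O}_{t_1,t_2})$ and $\epsilon(\mathcal{O}_{t_1t_2,t_3})$ together with an excess Euler class arising from the fibre-product, and a symmetric expression with class $R_R$ for $b_1\kt(b_2\kt b_3)$.

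The main obstacle is therefore to verify the identity $R_L(t_1,t_2,t_3)=R_R(t_1,t_2,t_3)$ of $K_T$-classes over $M^{t_1,t_2,t_3}$; my plan is to show that both sides equal the $K$-theoretic Euler class of the same ``three-fold obstruction bundle''
\[
\mathcal{O}_{t_1,t_2,t_3}\big|_V \;:=\; \bigoplus_{\alpha}\nu(V,M)_\alpha^{\oplus n_\alpha(t_1,t_2,t_3)},
\]
where $n_\alpha(t_1,t_2,t_3)$ is the integer determined by the logweight vector $(a_\alpha(t_1),a_\alpha(t_2),a_\alpha(t_3),a_\alpha((t_1t_2t_3)^{-1}))$ via the usual four-point obstruction formula of Chen--Ruan / Jarvis--Kaufmann--Kimura. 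The verification is a case analysis on weight-by-weight contributions: on each isotypical summand $\nu(V,M)_\alpha$ one checks that the multiplicity contributed to $R_L$, namely the rank of $\mathcal{O}_{t_1,t_2}$ restricted to the $\alpha$-summand plus the rank of $\mathcal{O}_{t_1t_2,t_3}$ restricted to it plus the contribution of the excess bundle for the fibre square, agrees with $n_\alpha(t_1,t_2,t_3)$, and similarly for $R_R$; the computation is symmetric in the three insertions and is a purely local calculation with logweights modulo $1$. This symmetry, together with the fact that $\ec_!$ depends only on the inclusion $M^{t_1,t_2,t_3}\hookrightarrow M^{t_1t_2t_3}$ (and not on the order of the factors), then yields $(b_1\kt b_2)\kt b_3 = b_1\kt(b_2\kt b_3)$, completing the proof.
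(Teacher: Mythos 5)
Your proposal is correct and follows essentially the same route as the paper: the paper also treats commutativity, the unit, and the $K_T(\pt)$-module structure as immediate from Definition~\ref{def:productNK_T}, and for associativity it simply defers to the push-pull/excess-intersection and logweight case analysis of Becerra--Uribe and Jarvis--Kaufmann--Kimura, which is exactly the argument you outline via the three-fold fixed locus and the three-fold obstruction bundle. Your write-up is in fact somewhat more explicit than the paper's (which does not reproduce the associativity computation), but it introduces no new ideas beyond the cited standard argument.
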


\begin{remark}\label{remark:integral-lift} 
As observed in \cite[Section 2]{BecUri07}, if $T$ acts on
$M$ locally freely
then $\NKd(M)\otimes\Q $ is isomorphic as an algebra to the full
orbifold K-theory
\cite{JarKauKim07} of $M$, $\okf([M/T])$. More specifically, the natural map
$\NKd(M) \to \NKd(M) \otimes \Q$ is an integral lift of
$\okf([M/T])$ as rings.
\end{remark}

In view of the proposition and remark above, we will henceforth occasionally
abuse language and refer to $\NKd(M)$ as being isomorphic to
$\okf([M/T])$. The precise statement is that when $T$ acts locally freely on $M$, $\NKd(M)$ is an
integral lift of $\okf([M/T])$.

\begin{proof}
The facts that $\NKd(M)$ is commutative and that $1 \in K_T(M^{id})$
acts as the unit element is immediate from the
definition~\eqref{eq:ktproduct}. The $K_T(\pt)$-algebra structure is
also immediate from the corresponding structure on each summand. It
remains to show associativity. Although we are not assuming that the
$T$-action on $M$ is locally free, the proof of associativity in our
$T$-equivariant situation nevertheless follows precisely that of
\cite{BecUri07} (and \cite{JarKauKim07}), so we do not reproduce it here.
\end{proof}

\subsection{The product on the fixed point set}\label{subsec:fixed-points}

Localization is a standard technique in equivariant topology. Given $M$ a stably 
complex
$T$-manifold and a $T$-equivariant algebraic/topological invariant, it is natural to ask whether
the invariant is encoded in terms of the $T$-fixed point set $M^T$ and local $T$-isotropy data
near the fixed points. The purpose of this section is to develop some
inertial $K$-theoretic analogues of standard localization theory in equivariant topology. The
presence of the quantum correction complicates matters, so we begin by defining a new ring
structure, denoted by $\star$, on $K_T(M^T)\otimes \Z[T]$,  where $\Z[T]$ is the group ring on $T$.
This is a K-theoretic version of the $\star$ product on $H_T^*(M^T)\otimes \Z[T]$ introduced in
\cite{GHK05}. When $M$ is a Hamiltonian $T$-space, we show in Section~\ref{subsec:Hamiltonian} that
the inertial $K$-theory injects into $K_T(M^T)\otimes \Z[T]$ as a ring, much as ordinary
equivariant $K$-theory $K_T(M)$ injects into $K_T(M^T)$ in such a case. This is the main motivation
for the product $\star$: the new product provides a different means of computing the product given
in \eqref{eq:ktproduct}.

For simplicity, we assume throughout that $M^T$ has finitely many connected components. In this
case
$$
K_T(M^T)\otimes \Z[T] = \bigoplus_{W\ c.c.\ M^T} (K_T(W)\otimes \Z[T]),
$$
where the direct sum is taken over connected components of $M^T$. When we refer to the restriction
of a class in $K_T(M^T)\otimes \Z[T]$ to a connected component $W$, we mean the summand
corresponding to $W$. As in the $\kt$ case, it suffices to define the $\star$ product of two
homogeneous classes \(\sigma_1 \otimes t_1\) and \(\sigma_2 \otimes t_2\) in \(K_T(M^T) \otimes
\Z[T],\) where \(t_1, t_2 \in T\) and \(\sigma_1, \sigma_2 \in K_T(M^T).\) Moreover, it also
suffices to specify the value of the product restricted to each connected component $W$ of $M^T$.

\begin{definition}\label{def:starproduct}
 Let
$\sigma_j\otimes t_j\in K_T(M^T)\otimes\Z[T]$ for $j=1,2$, where $t_j\in T$, be two homogeneous
classes. The {\bf $\star$ product on $K_{T}(M^{T}) \otimes \Z[T]$} is defined by
 \begin{equation}\label{eq:starproduct}
(\sigma_1\otimes t_1)\star (\sigma_2\otimes t_2)|_W :=
\left[ \sigma_1|_W\cdot \sigma_2|_W\cdot
\prod_{I_\mu\subset \nu(W,M)} \epsilon(I_\mu)^{a_\mu(t_1)+a_\mu(t_2)-a_\mu(t_1t_2)} \right] \otimes t_1 \cdot t_2,
\end{equation}
for each connected component $W$ of $M^{T}$. Here $\epsilon(I_\mu)\in K_T(W)$
denotes the $T$-equivariant K-theoretic Euler class of $I_\mu$, and $\nu(W,M)$ denotes the
normal bundle of $W$ in $M$. By extending linearly over the group algebra $\Z[T]$, the $\star$ product is defined on all of $K_{T}(M^{T}) \otimes \Z[T]$.
\end{definition}

The proof that the $\star$ product is associative is straightforward and identical to that of
\cite[Theorem 2.3]{GHK05}, so we do not repeat the argument, but record the result here.

\begin{theorem}
Let $M$ be a stably 
complex $T$-manifold.
 The multiplication $\star$ of Definition~\ref{def:starproduct} makes $K_T(M^T)\otimes \Z[T]$ into a
commutative, associative, unital algebra over the ground ring $K_T(\pt)$.
\end{theorem}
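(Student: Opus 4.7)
The plan is to verify commutativity, unitality, the $K_T(\pt)$-algebra axioms, and associativity by working one connected component of $M^T$ at a time. Since the $\star$-product in~\eqref{eq:starproduct} is defined by a formula evaluated on each connected component $W$ of $M^T$ and extended $\Z$-linearly over $\Z[T]$, it suffices to check each axiom for pairs (or triples) of homogeneous classes $\sigma_j \otimes t_j$ restricted to a fixed component $W$.

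Commutativity and unitality fall out from a direct inspection of~\eqref{eq:starproduct}. The right-hand side depends on $t_1, t_2$ only through the ordinary product $\sigma_1|_W \cdot \sigma_2|_W$ in $K_T(W)$ (commutative), the group product $t_1 t_2 \in T$ (commutative since $T$ is abelian), and the exponent $a_\mu(t_1) + a_\mu(t_2) - a_\mu(t_1 t_2)$, which is manifestly symmetric in $t_1, t_2$. The candidate unit is $1 \otimes e$ where $e \in T$ is the identity element: since $a_\mu(e) = 0$ for every $\mu$, each exponent collapses to zero and~\eqref{eq:starproduct} reduces to $\sigma \otimes t$. The $K_T(\pt)$-algebra structure is inherited summand-by-summand from the $K_T(\pt)$-algebra structure on each $K_T(W)$, combined with $\Z$-linearity over $\Z[T]$.

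The substantive step is associativity. I would expand both $\bigl((\sigma_1 \otimes t_1) \star (\sigma_2 \otimes t_2)\bigr) \star (\sigma_3 \otimes t_3)$ and $(\sigma_1 \otimes t_1) \star \bigl((\sigma_2 \otimes t_2) \star (\sigma_3 \otimes t_3)\bigr)$ and compare them restricted to a connected component $W$. Both expansions produce $\sigma_1|_W \cdot \sigma_2|_W \cdot \sigma_3|_W \otimes (t_1 t_2 t_3)$ multiplied by $\prod_{I_\mu \subset \nu(W, M)} \epsilon(I_\mu)^{E_\mu}$. A short telescoping calculation shows that the intermediate terms $a_\mu(t_1 t_2)$ and $a_\mu(t_2 t_3)$ enter with opposite signs between the two $\star$-operations on each side and cancel, leaving the symmetric exponent $E_\mu = a_\mu(t_1) + a_\mu(t_2) + a_\mu(t_3) - a_\mu(t_1 t_2 t_3)$ in both orderings. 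This is the $K$-theoretic analogue of the key identity used in \cite[Theorem 2.3]{GHK05}.

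One preliminary point deserves attention: to make sense of $\epsilon(I_\mu)^{a_\mu(t_1) + a_\mu(t_2) - a_\mu(t_1 t_2)}$ as a class in $K_T(W)$, the exponent must be a non-negative integer. Since each $a_\mu(t) \in [0,1)$ by definition of the logweight, one verifies that the exponent equals $\lfloor a_\mu(t_1) + a_\mu(t_2) \rfloor \in \{0, 1\}$, so well-definedness holds. With this in hand the remainder of the argument is formal bookkeeping; the expected difficulty is not a conceptual obstacle but merely the care needed to track signs and the $K$-theoretic (as opposed to cohomological) conventions when transcribing the argument from \cite{GHK05}.
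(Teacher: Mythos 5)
Your proposal is correct and follows essentially the same route as the paper, which itself omits the argument and defers to the cohomological case in \cite[Theorem 2.3]{GHK05}: the content there is exactly your telescoping identity $\bigl(a_\mu(t_1)+a_\mu(t_2)-a_\mu(t_1t_2)\bigr)+\bigl(a_\mu(t_1t_2)+a_\mu(t_3)-a_\mu(t_1t_2t_3)\bigr)=a_\mu(t_1)+a_\mu(t_2)+a_\mu(t_3)-a_\mu(t_1t_2t_3)$, applied componentwise, together with the observation that each exponent lies in $\{0,1\}$ because $a_\mu(t_1t_2)\equiv a_\mu(t_1)+a_\mu(t_2)\pmod{\Z}$. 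Your treatment of commutativity, the unit $1\otimes e$, and well-definedness of the exponents is likewise what the cited argument requires.
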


As stated above, the motivation for introducing $K_T(M^T) \otimes \Z[T]$ with the $\star$ product
is to allow for a kind of ``localization'' theorem in inertial $K$-theory. Consider the inclusion
$i_t:M^T\hookrightarrow M^t$ for each $t\in T$. This induces a map $i_t^*:K_T(M^t)\rightarrow
K_T(M^T)\otimes t\subset K_T(M^T)\otimes \Z[T]$. Combining the $i_t^*$ for each \(t \in T,\) we
obtain a map of $K_T$-modules
\begin{equation}\label{eq:KTmodule}
i_{NK}^*: NK_T(M)\longrightarrow K_T(M^T)\otimes \Z[T],
\end{equation}
which we refer to as a ``restriction map,'' since it is induced by the geometric inclusion \(M^T
\into M^t\) for each \(t \in T.\) This morphism $i_{NK}^*$ is in fact a ring homomorphism with
respect to the $\kt$ and the $\star$ products.

\begin{theorem}\label{th:homomorphism}
Let $M$ be a stably 
complex $T$-manifold. Let
$K_T(M^T)\otimes\Z[T]$ be endowed with the product $\star$ of
Definition~\ref{def:starproduct}. Then the restriction map
$\iNK^*:\ (\NKd(M), \kt) \to (K_T(M^T)\otimes\Z[T], \star)$ is a $K_{T}(\pt)$-algebra homomorphism.
\end{theorem}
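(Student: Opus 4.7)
The map $\iNK^*$ is $K_T(\pt)$-linear because it is built from the pullbacks $i_t^* : K_T(M^t) \to K_T(M^T)$ along the inclusions $i_t : M^T \hookrightarrow M^t$, each of which is $K_T(\pt)$-linear. It also sends the unit $1 \in K_T(M^{\mathrm{id}})$ of $\NKd(M)$ to $1 \otimes \mathrm{id}$, which one checks is the unit for $\star$ by inspecting Definition~\ref{def:starproduct} for $t_1 = t_2 = \mathrm{id}$ (every logweight is zero, so the Euler-class correction is trivial). Hence the whole content is to prove multiplicativity, which by bilinearity reduces to checking $\iNK^*(b_1 \kt b_2) = \iNK^*(b_1) \star \iNK^*(b_2)$ for homogeneous classes $b_1 \in K_T(M^{t_1})$ and $b_2 \in K_T(M^{t_2})$, and it suffices to check this after restricting to a single connected component $W$ of $M^T$.

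The plan is to apply the $K$-theoretic \emph{excess intersection formula} to the pullback-pushforward situation arising from the inclusions. Write $V_{12}$ for the component of $M^{t_1,t_2}$ containing $W$ and $V_{12\cdot}$ for the component of $M^{t_1 t_2}$ containing $W$, so that $W \subset V_{12} \subset V_{12\cdot}$, and let $\iota : W \hookrightarrow V_{12}$ be the inclusion. Because $M^T = M^T \cap M^{t_1, t_2}$ inside $M^{t_1 t_2}$, the square
\begin{equation*}
\xymatrix{
W \ar[r]^{\mathrm{id}} \ar[d]_{\iota} & W \ar[d]^{i_{t_1 t_2}} \\
V_{12} \ar[r]^{\ec_3} & V_{12\cdot}
}
\end{equation*}
is (transversely) Cartesian with excess normal bundle $E := \iota^*\,\nu(V_{12}, V_{12\cdot})$. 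The excess intersection formula gives
\[
i_{t_1 t_2}^* \ec_{3,!}\bigl(e_1^* b_1 \cdot e_2^* b_2 \cdot \epsilon(\mathcal{O})\bigr)\Big|_W \;=\; \iota^*\bigl(e_1^* b_1 \cdot e_2^* b_2 \cdot \epsilon(\mathcal{O}_{t_1,t_2}|_{V_{12}})\bigr) \cdot \epsilon(E).
\]
Since $e_j \circ \iota : W \hookrightarrow M^{t_j}$ factors through the inclusion $i_{t_j}$, this simplifies to
\[
i_{t_1}^* b_1|_W \cdot i_{t_2}^* b_2|_W \cdot \iota^*\epsilon(\mathcal{O}_{t_1,t_2}|_{V_{12}}) \cdot \epsilon(E).
\]

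Comparing with the right-hand side of \eqref{eq:starproduct}, it remains to prove the purely bundle-theoretic identity
\begin{equation*}
\iota^*\epsilon\bigl(\mathcal{O}_{t_1,t_2}|_{V_{12}}\bigr) \cdot \epsilon(E)
\;=\; \prod_{I_\mu \subset \nu(W,M)} \epsilon(I_\mu)^{a_\mu(t_1) + a_\mu(t_2) - a_\mu(t_1 t_2)}.
\end{equation*}
This is the key combinatorial step, and I expect it to be the main obstacle. To prove it, decompose $\nu(W,M) = \bigoplus_\mu I_\mu$ into the $T$-weight subbundles. Since each logweight lies in $[0,1)$, the exponent $a_\mu(t_1) + a_\mu(t_2) - a_\mu(t_1 t_2)$ takes only the values $0$ and $1$, so only the $\mu$'s achieving the value $1$ contribute to the right-hand side. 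A short case analysis on whether $a_\mu(t_1 t_2) = 0$ or $a_\mu(t_1 t_2) > 0$ shows that the exponent equals $1$ precisely when $a_\mu(t_1) + a_\mu(t_2) + a_\mu((t_1 t_2)^{-1}) = 2$, i.e.\ exactly the condition cutting out $\mathcal{O}_{t_1,t_2}$ in \eqref{eq:def-obstruction}, with the subcase $a_\mu(t_1 t_2)>0$ corresponding to weights in $\iota^*\mathcal{O}_{t_1,t_2}|_{V_{12}}$ and the subcase $a_\mu(t_1 t_2)=0$ corresponding to weights in the excess bundle $E$ (since those are the weights on which $t_1$ and $t_2$ act nontrivially but $t_1 t_2$ acts trivially). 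These two families are disjoint and exhaust the weights with exponent $1$, so multiplicativity of $\epsilon$ under direct sums yields the required identity, completing the proof that $\iNK^*$ is a $K_T(\pt)$-algebra homomorphism.
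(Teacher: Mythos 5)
Your proposal is correct and follows essentially the same route as the paper: the paper's proof likewise reduces to homogeneous classes restricted to a component $W$ of $M^T$, identifies the exponent-$1$ isotypic pieces of $\nu(W,M)$ with $\mathcal{O}_{t_1,t_2}|_W \oplus \nu(M^{t_1,t_2},M^{t_1t_2})|_W$ (its equation \eqref{eq:obstructionequivalence}), and performs the same pull-back-of-push-forward computation that you package explicitly as the excess intersection formula for the Cartesian square. One small slip in your write-up: the clause ``the exponent equals $1$ precisely when $a_\mu(t_1)+a_\mu(t_2)+a_\mu((t_1t_2)^{-1})=2$'' is false as a biconditional, since in the subcase $a_\mu(t_1t_2)=0$ with exponent $1$ the three logweights sum to $1$, not $2$; however, the two disjoint families you actually use in the final step ($\iota^*\mathcal{O}_{t_1,t_2}$ for $a_\mu(t_1t_2)>0$ and the excess bundle $E$ for $a_\mu(t_1t_2)=0$) are exactly right, so the argument stands.
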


\begin{proof}

This proof follows that of Theorem 3.6 of \cite{GHK05},
though the explanation below is slightly different from that in
\cite{GHK05} and is self-contained.

We begin by noting that for any \(t_1, t_2 \in T,\) the exponent
$a_\mu(t_1)+a_\mu(t_2)-a_\mu(t_1t_2)$ appearing in the definition of the $\star$ product is always
either 0 or 1. Using the defining properties of Euler classes, we may deduce
$$
\prod_{I_\mu\subset \nu(W, M)} \epsilon(I_\mu)^{a_\mu(t_1)+a_\mu(t_2)-a_\mu(t_1t_2)} = \epsilon\left(\bigoplus_{\stackrel{I_\mu\subset \nu(W, M)}{a_\mu(t_1)+a_\mu(t_2)-a_\mu(t_1t_2)=1}} \!\!\!\!\!\!\!\!\!\!\!\! I_\mu\ \ \ \ \right),
$$
so the expression appearing in the definition of the $\star$ product
is the Euler class of a certain sub-bundle of $\nu(W, M)$. We will now
show that this sub-bundle has a different description.
Given $\mu$ such that
$a_\mu(t_1)+a_\mu(t_2)-a_\mu(t_1t_2)=1$, then either $a_\mu(t_1t_2)\neq 0$, in which case
\begin{align*}
a_\mu(t_1)+a_\mu(t_2)-a_\mu(t_1t_2) &= a_\mu(t_1)+a_\mu(t_2)-(1-a_\mu((t_1t_2)^{-1})=1\\
& \implies  a_\mu(t_1)+a_\mu(t_2)+a_\mu((t_1t_2)^{-1})=2\\
&\implies I_\mu\subset \mathcal{O}_{t_1,t_2}|_W,
\end{align*}
or else $a_\mu(t_1t_2)=0$, in which case $I_\mu\subset
\nu(M^{t_1,t_2}, M^{t_1t_2})|_W$. Conversely, if $I_\mu$ is an
isotypic component of $\mathcal{O}_{t_1,t_2}|_W\oplus \nu(M^{t_1,t_2},
M^{t_1t_2})|_W$, then a similar simple argument shows that
$a_\mu(t_1)+a_\mu(t_2)-a_\mu(t_1t_2)=1$. Thus we have shown that
\begin{equation}\label{eq:obstructionequivalence}
\bigoplus_{\stackrel{I_\mu\subset \nu(W, M)}{a_\mu(t_1)+a_\mu(t_2)-a_\mu(t_1t_2)=1}}\!\!\!\!\!\!\!\!\!\!\!\! I_\mu\ \ =\  \ \mathcal{O}_{t_1,t_2}|_W \oplus \nu(M^{t_1,t_2}, M^{t_1t_2})|_W.
\end{equation}
This makes it clear that the obstruction bundle over$ M^{t_1,t_2}$
enters into the $\star$ product given by \eqref{eq:def-obstruction} as
well.

To prove that $i^*_{NK}$ is a ring homomorphism, it suffices to check the statement for homogeneous
elements. Let $b_1\in K_T(M^{t_1})$ and $b_2\in K_T(M^{t_2})$. Then for each fixed point component
$W$ of $M^T$, we have
\begin{align*}
i^*_{NK}(b_1\kt b_2)|_W &= i^*_{NK}[(\overline{e}_3)_*(e_1^*b_1\cdot e_2^*b_2\cdot \epsilon(\mathcal{O}_{t_1,t_2}))]|_W\\
& = b_1|_W \cdot b_2|_W\cdot  i^*_{NK}[(\overline{e}_3)_*(\epsilon(\mathcal{O}_{t_1,t_2}))]|_W\\
& =  b_1|_W \cdot b_2|_W \cdot \epsilon(\mathcal{O}_{t_1,t_2})|_W \epsilon(\nu(M^{t_1,t_2}, M^{t_1t_2}))|_W\\
& = b_1|_W \cdot b_2|_W\cdot \epsilon(\mathcal{O}_{t_1,t_2}|_W \oplus \nu(M^{t_1,t_2}, M^{t_1t_2})|_W).
\end{align*}
The equivalence~\eqref{eq:obstructionequivalence} of bundles allows us to conclude that this
product $b_1\kt b_2$ restricted to a component $W$ of $M^T$ agrees with the product of
$i_{NK}^*(b_1)|_W\star i_{NK}^*(b_2)|_W$ as in~\eqref{eq:starproduct}, as desired.
\end{proof}

\subsection{The case that $M$ is a Hamiltonian $T$-space}\label{subsec:Hamiltonian}

We now turn to the special case that $(M,\omega)$ is a Hamiltonian $T$-space.  The motivation for
the definition of the $\star$ product on $K_T(M^T) \otimes \Z[T]$ is to provide a target for a
localization theoreom. As in the case of ordinary equivariant (rational) cohomology and equivariant
$K$-theory, the fixed points $M^T$ of a Hamiltonian $T$-space play a special role in inertial
$K$-theory. In particular, we have the following theorem.

\begin{theorem}\label{th:injectivity}
Let $(M, \omega, \Phi)$ be a Hamiltonian $T$-space.
Suppose there exists a component of $\Phi$ which is proper and bounded
below, and further suppose that $M^T$ has only finitely many connected
components. The map of rings given by
$$
i_{NK}^*: (NK_T(M), \kt) \rightarrow (K_T(M^T)\otimes \Z[T], \star)
$$
is injective.
\end{theorem}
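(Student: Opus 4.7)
The plan is to reduce injectivity of $\iNK^*$ to the classical injectivity of ordinary equivariant $K$-theory under restriction to the $T$-fixed set, applied separately to each of the $T$-invariant submanifolds $M^t$.

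First, I would exploit the $T$-grading. By construction, $\iNK^*$ maps the summand $K_T(M^t) = NK_T^t(M)$ into $K_T(M^T) \otimes t \subset K_T(M^T) \otimes \Z[T]$. Since elements of $\NKd(M)$ are finite sums $b = \sum_t b_t$ and the summands $K_T(M^T) \otimes t$ are linearly independent in the target, $\iNK^*(b) = 0$ forces $i_t^*(b_t) = 0$ for every $t$ separately. Consequently, it suffices to prove that for each fixed $t \in T$, the ordinary equivariant $K$-theory pullback
\[
i_t^* : K_T(M^t) \longrightarrow K_T(M^T)
\]
induced by the inclusion $M^T \hookrightarrow M^t$ is injective.

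Second, I would verify that each $M^t$ inherits the structure needed to apply a classical fixed-point injectivity theorem. Since $T$ is a compact connected Lie group acting symplectically on $M$, the fixed set $M^t$ is a closed $T$-invariant symplectic submanifold, hence itself a Hamiltonian $T$-space with moment map $\Phi|_{M^t}$. The component of $\Phi$ that is proper and bounded below on $M$ remains proper and bounded below upon restriction to the closed subset $M^t$. Moreover $M^T \subset M^t$, so $(M^t)^T = M^T$, which by hypothesis has finitely many components.

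Third, I would invoke the standard injectivity theorem for equivariant $K$-theory of Hamiltonian $T$-spaces: for a Hamiltonian $T$-space $N$ with a proper moment-map component bounded below and finitely many $T$-fixed components, the restriction $K_T(N) \to K_T(N^T)$ is injective. Applied to $N = M^t$ (with $N^T = M^T$) this concludes the proof. The justification of this final assertion is a Morse--Bott argument on an appropriate component $\phi$ of $\Phi|_{M^t}$ whose critical set is $N^T$: the negative normal bundles at each critical component are canonically complex, so the Thom isomorphism in $T$-equivariant $K$-theory makes each attaching step of the sublevel-set filtration inject, and iterating yields injectivity.

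The main obstacle is the third step: assembling an integral equivariant $K$-theoretic fixed-point injectivity statement for the possibly non-compact Hamiltonian spaces $M^t$ in the strength required. This is the $K$-theoretic counterpart of the familiar rational equivariant cohomology injectivity (which follows from Kirwan's Morse theory of the moment map and equivariant formality), and is precisely the type of argument developed in the Harada--Landweber papers cited earlier; the delicate point to check is that properness and lower-boundedness of a single component of $\Phi$ is enough to run the Morse induction on each stratum $M^t$ uniformly.
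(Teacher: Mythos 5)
Your proposal is correct and follows essentially the same route as the paper: reduce to injectivity of each $i_t^*: K_T(M^t) \to K_T(M^T)$ via the $\Z[T]$-grading, check that each $M^t$ is itself a Hamiltonian $T$-space inheriting the hypotheses, and invoke the Harada--Landweber fixed-point injectivity theorem for equivariant $K$-theory. The only difference is that you sketch the Morse--Bott proof of that last ingredient, whereas the paper simply cites it.
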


\begin{proof}
We have already shown in Theorem~\ref{th:homomorphism} that $i_{NK}^*$ is an algebra homomorphism.
Thus we need only show injectivity.  Since $T$ is abelian, standard symplectic techniques allow us
to conclude that $(M^t, \omega \vert_{M^t}, \Phi \vert_{M^t})$ is also a Hamiltonian $T$-space for
each $t\in M$. Furthermore, the second author and Landweber prove in \cite{HL-kernel} that, for any
Hamiltonian $T$-space $M$ satisfying the assumptions of our theorem, the inclusion
$M^T\hookrightarrow M^t$ induces an injection $K_T(M)\hookrightarrow K_T(M^T)$. Thus each map
$$
i_t^*: K_T(M^t)\longrightarrow K_T(M^T)\otimes t
$$
is an injection, which implies that the map
$i_{NK}^*$ defined by an $i_t^*$ on each component is also injective,
as desired.
\end{proof}

It is clear from the proof that the statement holds for any stably complex $T$-space $M$ with the property that there is an injection
$$
K_T(M^t)\longrightarrow K_T(M^T)
$$
for every $t\in T$. In the case of equivariant {cohomology}, these spaces are called {\bf robustly
equivariantly injective} in \cite{GHK05}. Thus, Theorem~\ref{th:injectivity} holds also in the case
that $M$ is robustly equivariantly injective in $K$-theory.

\section{Surjectivity from inertial $K$-theory to full orbifold $K$-theory}\label{sec:surjectivity}

The main motivation for our definition of the inertial $K$-theory of a $T$-space $M$ is that we can exploit
it to give explicit computations of the full orbifold $K$-theory of abelian symplectic quotients.
Our methods build on the Kirwan surjectivity techniques in ordinary (non-orbifold) $K$-theory as
developed in \cite{HarLan07}. In this section we prove a general surjectivity theorem onto the
\(\okf([M \mod_{\! \alpha} T])\) of abelian symplectic quotients and discuss in some detail the computation of
the kernel of the surjection. Indeed, for a wide class of examples, this method yields an explicit
description via generators and relations of $\okf([M \mod_{\! \alpha} T])$. In Section~\ref{sec:examples} we
will give an explicit illustration of the use of our techniques in the case of weighted projective
spaces 
occurring as symplectic quotients. 

We take a moment here to discuss the technical hypotheses on the
moment map $\Phi$ to be used in this section.  A more
detailed discussion of these hypotheses may be found in
\cite[beginning of Section 3]{HL-kernel}. For
the surjectivity theorems (Theorems~\ref{theorem:orbi-Kirwan-surj}
and~\ref{theorem:orbi-Kirwan-surj-Gamma}) it is only necessary to
assume that $\Phi$ is proper; this ensures that the tools of
equivariant Morse theory may be applied to $\|\Phi\|^2$.
However, for the computation of the kernel of the surjective
Kirwan map as given in Theorem~\ref{theorem:kernel}, we need
the additional technical assumption that there is a component of
$\Phi$ that is proper and bounded below, and that $M^T$ the fixed
point set has only finitely many connected components. In practice
this is not a very restrictive condition (see \cite[Section
3]{HL-kernel} for further discussion).

\subsection{Surjectivity to $\okf([\Phi^{-1}(\alpha)/T])$}

Here we show that under some mild technical assumptions,
the inertial $K$-theory associated to a
Hamiltonian $T$-space $(M,\omega,\Phi)$ defined in the previous
section surjects onto the full orbifold $K$-theory of the symplectic
quotient \([M \mod_{\! \alpha} T] := [\Phi^{-1}(\alpha)/T],\) where $\alpha$
is a regular value of $\Phi$.

We begin by recalling the setup. As above, $T$ denotes a compact connected abelian Lie group (i.e. a connected compact torus). Let $(M,\omega,\Phi)$ be a finite-dimensional Hamiltonian
$T$-space,  and assume that the moment map $\Phi: M \to \t^*$ is proper. Suppose that \(\alpha \in
\t^*\) is a regular value of $\Phi$. Then the level set \(\Phi^{-1}(\alpha)\) is a submanifold of
$M$, and $T$ acts locally freely on $\Phi^{-1}(\alpha)$. Hence the quotient
\([\Phi^{-1}(\alpha)/T]\) is an orbifold. By results of Becerra and
Uribe,
\begin{equation}\label{eq:def-orb-Ktheory}
\okf([\Phi^{-1}(\alpha)/T]) \cong NK^{\diamond}_{T}(\Phi^{-1}(\alpha)),
\end{equation}
where by the left hand side we mean (by slight abuse of notation) an integral lift of the orbifold $K$-theory, as in Remark~\ref{remark:integral-lift}.

We now show that the right-hand side of \eqref{eq:def-orb-Ktheory} is computable, using techniques
from equivariant symplectic geometry. The inclusion of the level set
\[
\iota: \Phi^{-1}(\alpha) \into M
\]
is $T$-equivariant and induces a map in equivariant $K$-theory,
\[
\iota^*: K_T(M) \to K_T(\Phi^{-1}(\alpha)).
\]
The fixed points sets $M^t$ are also Hamiltonian $T$-spaces, so we also have
\[
\iota^*: K_T(M^t) \to K_T(\Phi^{-1}(\alpha)^t).
\]
Here by abuse of notation we denote also by $\iota$ the inclusions \(\Phi^{-1}(\alpha)^t \into
M^t\), for all \(t \in T\). Hence there exists a map (also denoted $\iota^*$)
\begin{equation}\label{eq:iota-on-NK}
\iota^*: NK_T(M) = \bigoplus_{t \in T} K_T(M^t) \to NK_T(\Phi^{-1}(\alpha)) = \bigoplus_{t \in T} K_T(\Phi^{-1}(\alpha)^t)
\end{equation}
defined in the obvious way on corresponding summands. It is immediate from the definition that the map is an additive homomorphism.

What is not at all obvious is that the map $\iota^*$ of~\eqref{eq:iota-on-NK} is also a {ring}
homomorphism with respect to the multiplicative structure $\kt$ on both sides, and furthermore that
$\iota^*$ provides an effective way of computing $NK_T(\Phi^{-1}(\alpha))$, and hence
$\okf([\Phi^{-1}(\alpha)/T])$. We now address each of these issues in turn.

As observed in \cite{GolHar06} and \cite{GHK05}, it is {not} necessarily true that a general
$T$-equivariant map of $T$-spaces \(f: X \to Y\) induces a ring map $f^{*}: NH^{*}_{T}(Y) \to
NH^{*}_{T}(X)$ on inertial cohomology, with respect to the $\smile$ product in inertial cohomology.
Nevertheless, if a $T$-equivariant inclusion \(\iota: X \into Y\) behaves well with respect to the
fixed point sets, \cite[Proposition 5.1]{GHK05} states that $\iota^*$ { is} a ring homomorphism
with respect to the new product in inertial cohomology. We now prove a $K$-theoretic analogue of
this fact.

\begin{proposition}\label{prop:ring-map}
Let $Y$ be a stably complex $T$-space. Let \(\iota: X \into Y\) be a $T$-equivariant inclusion, and suppose also that $X$ is tranvserse to all \(Y^{t}, t \in T.\) Then the map
\(\iota^{*}: NK^{\diamond}_{T}(Y) \to NK^{\diamond}_{T}(X)\)  induced by inclusion is a ring
homomorphism with respect to $\kt$.
\end{proposition}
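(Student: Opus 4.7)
The plan is to reduce the claim to checking it on homogeneous classes $b_1\in K_T(Y^{t_1})$, $b_2\in K_T(Y^{t_2})$, and then compare the two sides of $\iota^*(b_1\kt b_2)=\iota^*(b_1)\kt \iota^*(b_2)$ directly from Definition~\ref{def:productNK_T}. Writing $e_j^Y,\ec_3^Y$ for the inclusions associated to $Y$ and $e_j^X,\ec_3^X$ for their analogues on $X$, I would need three ingredients: (i) that $\iota^*\epsilon(\mathcal{O}^Y_{t_1,t_2})=\epsilon(\mathcal{O}^X_{t_1,t_2})$ in $K_T(X^{t_1,t_2})$; (ii) that the evident squares built from $e_j^X,e_j^Y$ are Cartesian, so that the ordinary pullbacks of cup products match; and (iii) a clean base-change identity $\iota^*\circ\ec_{3,!}^Y=\ec_{3,!}^X\circ\iota^*$ for the Gysin pushforward. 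Given these three, the identity reduces to the standard fact that $\iota^*$ is a $K_T(\mathrm{pt})$-algebra homomorphism for the ordinary product on equivariant $K$-theory.

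Ingredient (i) is a direct consequence of the transversality hypothesis. Transversality of $\iota$ to $Y^t$ gives $X^t=X\cap Y^t$ together with a canonical $T$-equivariant isomorphism $\nu(X^t,X)\cong\iota^*\nu(Y^t,Y)$; applying this at the level of the double fixed loci identifies $X^{t_1,t_2}$ with $X\cap Y^{t_1,t_2}$ and $\nu(X^{t_1,t_2},X)$ with $\iota^*\nu(Y^{t_1,t_2},Y)$. Because the $T$-action is preserved, the isotypic decomposition~\eqref{eq:normal-bundle-t-decomp} pulls back summand-by-summand and the logweights $a_\alpha(t)$ are the same on $X$ and on $Y$. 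In particular the condition defining $\mathcal{O}_{t_1,t_2}\vert_V$ in~\eqref{eq:def-obstruction} is preserved, so $\iota^*\mathcal{O}^Y_{t_1,t_2}\cong \mathcal{O}^X_{t_1,t_2}$, and the $K$-theoretic Euler classes correspond under $\iota^*$. Ingredient (ii) is immediate from $X^s=X\cap Y^s$ and naturality of the cup product under pullback.

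Ingredient (iii) is where the main subtlety lies. The square
\[
\xymatrix{
X^{t_1,t_2} \ar@{^{(}->}[r]^{\ec_3^X} \ar@{^{(}->}[d]_{\iota} & X^{t_1t_2} \ar@{^{(}->}[d]^{\iota} \\
Y^{t_1,t_2} \ar@{^{(}->}[r]^{\ec_3^Y} & Y^{t_1t_2}
}
\]
is Cartesian by the description $X^s=X\cap Y^s$, but to apply the excess-free base-change identity $\iota^*\ec_{3,!}^Y=\ec_{3,!}^X\iota^*$ in equivariant $K$-theory the square must be transverse, i.e.\ the excess normal bundle must vanish. This is the main obstacle. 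It amounts to checking that inside $Y^{t_1t_2}$ the submanifolds $X^{t_1t_2}$ and $Y^{t_1,t_2}$ meet transversally; a short tangent-space argument using $T_xX^s=T_xX\cap T_xY^s$ at each $x\in X^s$ (a direct consequence of transversality of $\iota$ to each $Y^s$) shows that $\nu(X^{t_1,t_2},X^{t_1t_2})\cong\iota^*\nu(Y^{t_1,t_2},Y^{t_1t_2})$, so the excess bundle is trivial and the clean base-change formula applies. Assembling (i), (ii), and (iii) then yields the desired ring homomorphism property.
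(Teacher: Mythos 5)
Your proposal is correct and follows essentially the same route as the paper: the paper defers ingredients (i) and (ii) to the cohomological argument of \cite[Proposition 5.1]{GHK05} and isolates exactly your ingredient (iii) as the one new $K$-theoretic point, verifying the push--pull square by the same identification $\nu(X^{t_1,t_2},X^{t_1t_2})\cong\iota^*\nu(Y^{t_1,t_2},Y^{t_1t_2})$ coming from transversality (via the spin-c/Thom-isomorphism description of the Gysin map). Your write-up is merely more self-contained, not a different argument.
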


\begin{proof}
The argument is nearly the same as that given for equivariant
cohomology \cite[Proposition 5.1]{GHK05}, so we do not fully reproduce it
here. The only additional item to check in our $K$-theoretic setting is that
the following diagram in $T$-equivariant $K$-theory
\begin{equation}\label{eq:pushpull-commute}
\xymatrix{
K_T(Y^{t_1, t_2})  \ar[r]^{(\overline{e}_3^{1,2})_{!}} \ar[d]^{\iota^*} & K_T(Y^{t_1 t_2}) \ar[d]^{\iota^*}  \\
K_T(X^{t_1, t_2}) \ar[r]^{(\overline{e}_3^{1,2})_{!}}  & K_T(X^{t_1 t_2})
}
\end{equation}
commutes, i.e. \(\iota^* (\overline{e}_{3}^{1,2})_{!} = (\overline{e}_{3}^{1,2})_{!} \iota^*,\)
where by abuse of notation we denote by $\iota^*$ both of the inclusions \(X^{t_1 t_2} \into Y^{t_1
t_2}\) and \(X^{t_1, t_2} \into Y^{t_1, t_2},\) and also by $\overline{e}^{1,2}_{3}$ both of the
inclusions \(X^{t_1, t_2} \into X^{t_1 t_2}\) and \(Y^{t_1, t_2} \into Y^{t_1 t_2}.\) To see this,
note that both normal bundles in question have natural $T$-equivariant complex structures, so they
have canonical spin-c structures, as remarked in Section~\ref{sec:definition}. The definition of
the pushforward in equivariant $K$-theory uses the Thom isomorphism with respect to a spin-c
structure on the relevant normal bundles to an inclusion.  Hence, in order to check
that~\eqref{eq:pushpull-commute} commutes, it suffices to check that the normal bundle
\(\nu(Y^{t_{1}, t_{2}}, Y^{t_{1}t_{2}})\) restricts precisely to the normal bundle \(\nu(X^{t_{1},
t_{2}}, X^{t_{1}, t_{2}})\) via \(\iota^{*}.\) This follows from the transversality of $X$ to all
fixed points $Y^{t}$, for any $t \in T$.
\end{proof}

Although the transversality hypothesis of Proposition~\ref{prop:ring-map} is rather
restrictive, there is a natural class of examples in which the
conditions are satisfied.

\begin{lemma}\label{lemma:level-transverse}
Let $(M, \omega, \Phi)$ be a Hamiltonian $T$-space with proper moment map
\(\Phi: M \to \t^{*}.\) Assume that $\alpha \in \t^{*}$ is a regular
value of $\Phi$. Then the inclusion of the level set \(\iota:
\Phi^{-1}(\alpha) \into M\) satisfies the hypotheses of
Proposition~\ref{prop:ring-map}.
\end{lemma}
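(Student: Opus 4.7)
The plan is to verify the two hypotheses of Proposition~\ref{prop:ring-map}: namely that $\iota$ is $T$-equivariant, and that $\Phi^{-1}(\alpha)$ is transverse to $M^{t}$ for every $t \in T$. The equivariance is immediate: since $T$ is abelian it acts trivially on $\t^{*}$, so $\Phi$ is $T$-invariant and $\Phi^{-1}(\alpha)$ is a $T$-invariant subset. The substance of the lemma is the transversality assertion.

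Fix $t \in T$ and $p \in \Phi^{-1}(\alpha) \cap M^{t}$; I need to show $T_{p}\Phi^{-1}(\alpha) + T_{p}M^{t} = T_{p}M$. Because $\alpha$ is a regular value, $T_{p}\Phi^{-1}(\alpha) = \ker d\Phi_{p}$ and $d\Phi_{p} : T_{p}M \to \t^{*}$ is surjective, so the transversality statement reduces to showing that the restriction $d\Phi_{p}|_{T_{p}M^{t}} : T_{p}M^{t} \to \t^{*}$ is surjective. Equivalently, I must verify that $\alpha$ is a regular value of the restricted moment map $\Phi|_{M^{t}}$.

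The key observation is that $M^{t} = M^{T_{t}}$, where $T_{t} := \overline{\langle t \rangle}$ denotes the closure in $T$ of the cyclic subgroup generated by $t$: this follows because any point fixed by $t$ is fixed by every power $t^{n}$ and, by continuity of the $T$-action, by all of their limits. Since $\alpha$ is a regular value of $\Phi$, the infinitesimal stabilizer $\t_{p} \subset \t$ is trivial for every $p \in \Phi^{-1}(\alpha)$. As $T_{t}$ fixes $p$, we have $\Lie(T_{t}) \subset \t_{p} = 0$, so $T_{t}$ is actually a finite subgroup and the $T$-action on $M^{t}$ factors through the full-dimensional quotient torus $T/T_{t}$. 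For this quotient action, $\Phi|_{M^{t}}$ is again a moment map, valued in $\t^{*} \cong (\t/\Lie(T_{t}))^{*}$, and its infinitesimal stabilizer at $p$ is still trivial. Hence $\alpha$ is a regular value of $\Phi|_{M^{t}}$, which yields the surjectivity of $d\Phi_{p}|_{T_{p}M^{t}}$ and hence the required transversality.

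The main (and only) subtlety is to recognize that the regular value hypothesis does more than cut out a smooth level set $\Phi^{-1}(\alpha)$: it forces every intersection $\Phi^{-1}(\alpha) \cap M^{t}$ to consist of points with discrete $T$-stabilizer, which in turn forces the closure $T_{t}$ to be finite. Without this observation, one might worry about a delicate rank computation for $\Phi|_{M^{t}}$ when $t$ generates a positive-dimensional subtorus, but in that situation $\Phi^{-1}(\alpha) \cap M^{t}$ is empty and the transversality condition holds vacuously.
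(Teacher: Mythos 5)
Your argument is correct and is the standard one; the paper itself gives no proof here, deferring to \cite[Theorem 6.4]{GHK05}, and your reduction of transversality to the surjectivity of $d\Phi_p|_{T_pM^t}$, followed by the observation that a regular level meets only points with discrete stabilizer, is exactly the expected route. The one ingredient you use implicitly is that $M^t$ is a \emph{symplectic} submanifold of $M$ (so that $\Phi|_{M^t}$ is a genuine moment map and $\operatorname{im}\, d(\Phi|_{M^t})_p$ equals the annihilator of the infinitesimal stabilizer); this is a standard fact the paper also uses freely, so no gap results.
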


This is a purely topological statement and its proof can be found in \cite[Theorem 6.4]{GHK05}.
We may conclude that we have a ring homomorphism onto the integral
full orbifold $K$-theory
\begin{equation}\label{eq:orbi-Kirwan-map}
\xymatrix{
\kappa: NK^{\diamond}_T(M) \ar[r]^{\iota^*} & NK^{\diamond}_T(\Phi^{-1}(\alpha))
\ar[r]^<<<<<{\cong} & \okf([\Phi^{-1}(\alpha)/T]).
}
\end{equation}
We refer to this composition $\kappa$ as the {\bf
orbifold Kirwan map}. We now recall the following, which is the
analogue of the Kirwan surjectivity theorem (originally proved for
rational Borel-equivariant cohomology) in integral $K$-theory
\cite[Theorem 3.1]{HarLan07}. We state the theorem only in the special case
needed here.

\begin{theorem}\label{theorem:ordinary-Kirwan} (\cite{HarLan07})
Let $(M, \omega)$ be a Hamiltonian $T$-space with proper moment map \(\Phi: M \to \t^{*}.\) Assume that $\alpha \in \t^{*}$ is a regular value of $\Phi$. Then the map $\iota^{*}$ induced by the inclusion \(\iota: \Phi^{-1}(\alpha) \into M,\)
\[
\iota^{*}: K_{T}(M) \to K_{T}(\Phi^{-1}(\alpha)),
\]
is a surjection.
\end{theorem}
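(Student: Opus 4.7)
The plan is to adapt Kirwan's Morse-theoretic proof of equivariant rational cohomology surjectivity to the integral $T$-equivariant $K$-theoretic setting. I consider the norm-square of the moment map $f := \|\Phi - \alpha\|^2 : M \to \R$. Because $\Phi$ is proper and $\alpha$ is a regular value, $f$ is a proper, $T$-invariant Morse-Bott function whose minimum stratum equals $\Phi^{-1}(\alpha)$. By Kirwan's analysis, the critical set of $f$ decomposes into a finite disjoint union $\Crit(f) = \bigsqcup_\beta C_\beta$ of $T$-invariant compact submanifolds indexed by vectors $\beta \in \t^*$, where $C_0 = \Phi^{-1}(\alpha)$; for each $\beta \neq 0$ the torus $T$ acts on the negative normal bundle $\nu_\beta^-$ of $C_\beta$ with strictly nontrivial weights, all pairing positively with $\beta$ under a chosen invariant inner product.

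Next, I induct along the $T$-invariant Morse filtration $M_{\le c} := f^{-1}((-\infty, c])$, which exhausts $M$ as $c \to \infty$ by properness. At each critical value $c$ corresponding to a stratum $C_\beta$, equivariant excision together with the Thom isomorphism for the complex bundle $\nu_\beta^-$ produces a Thom-Gysin long exact sequence
\begin{equation*}
\cdots \longrightarrow K_T(C_\beta) \xrightarrow{\,\cdot\, \epsilon(\nu_\beta^-)\,} K_T(M_{\le c}) \xrightarrow{\,r\,} K_T(M_{\le c-\epsilon}) \longrightarrow \cdots,
\end{equation*}
where $\epsilon(\nu_\beta^-) = \lambda_{-1}((\nu_\beta^-)^*)$ is the $T$-equivariant $K$-theoretic Euler class. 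If every such sequence splits into short exact sequences, equivalently, if each restriction map $r$ is surjective, then telescoping from the top critical value down to the minimum yields the desired surjection $K_T(M) \twoheadrightarrow K_T(\Phi^{-1}(\alpha))$.

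The hard part, and the core technical step, is precisely this splitting: I must show that multiplication by $\epsilon(\nu_\beta^-)$ on $K_T(C_\beta)$ is injective for every $\beta \neq 0$. This is the integral $K$-theoretic analogue of the classical Atiyah-Bott lemma. Over $\Q$-coefficients in rational equivariant cohomology, non-zero-divisibility of the Euler class follows formally from the non-triviality of the weights; integrally in $K$-theory the argument is more delicate because the Euler class of a weight line $L_\mu$ is $1 - [L_\mu^*]$ rather than a monomial. My approach would be to decompose $\nu_\beta^-$ into $T$-isotypical summands $\bigoplus_\mu L_\mu$, write $\epsilon(\nu_\beta^-) = \prod_\mu (1 - [L_\mu^*])$, and induct on the number of factors, reducing to the statement that $1 - e^{-\mu}$ is not a zero-divisor in $R(T) \cong \Z[\widehat{T}]$ for any non-trivial character $\mu$, together with a check that $K_T(C_\beta)$ is sufficiently controlled as an $R(T)$-module (e.g.\ torsion-free over an appropriate subring determined by $\beta$). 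Combining this injectivity with the Morse induction completes the proof.
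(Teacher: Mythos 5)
This theorem is not proved in the paper at all: it is quoted verbatim from Harada--Landweber \cite[Theorem 3.1]{HarLan07}, so there is no internal argument to compare against. Your outline is, in substance, the proof given in that reference: equivariant Morse theory for the proper function $\|\Phi-\alpha\|^2$, the six-term Thom--Gysin sequence coming from excision and the Thom isomorphism for the complex negative normal bundle $\nu_\beta^-$, and the reduction of surjectivity to the statement that multiplication by $\epsilon(\nu_\beta^-)=\lambda_{-1}((\nu_\beta^-)^*)$ is injective on $K_T(C_\beta)$. Two caveats. First, some bookkeeping: the critical sets $C_\beta$ need not be compact and the stratification is only locally finite in general, so the telescoping must be organized as an inverse/direct limit over the exhaustion by sublevel sets; this is routine but should be said. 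Second, and more seriously, your sketch of the key non-zero-divisor lemma does not close as written. Each factor of $\epsilon(\nu_\beta^-)$ has the form $1-[L_\mu^*]$ where $L_\mu$ is a genuine line bundle on $C_\beta$ twisted by a nontrivial character of the subtorus $T_\beta$ determined by $\beta$, so one cannot simply reduce to the non-zero-divisibility of $1-e^{-\mu}$ in $R(T)$. The argument in \cite{HarLan07} instead uses that $T_\beta$ acts trivially on $C_\beta$, so $K_T(C_\beta)\cong K_{T/T_\beta}(C_\beta)\otimes R(T_\beta)$ is a Laurent polynomial module over $\Z[\widehat{T_\beta}]$, and then orders the weights by a generic cocharacter of $T_\beta$: the extremal term of $\epsilon(\nu_\beta^-)$ in this grading is, up to sign, a product of classes of line bundles, hence a unit in $K_{T/T_\beta}(C_\beta)$, and a leading-coefficient argument gives injectivity integrally. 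You correctly isolated this as the crux, but the ``torsion-free over an appropriate subring'' hypothesis you propose is neither available nor needed; the extremal-weight argument is what makes the integral statement work.
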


The following is now straightforward.

\begin{theorem}\label{theorem:orbi-Kirwan-surj}
Let $(M, \omega, \Phi)$ be a Hamiltonian $T$-space with proper moment
map \(\Phi: M \to \t^{*}.\) Assume that $\alpha \in \t^{*}$ is a
regular value of $\Phi$, so that $T$ acts locally freely on
$\Phi^{-1}(\alpha)$. Then the orbifold Kirwan map to the
orbifold $K$-theory
\[
\kappa: NK^{\diamond}_{T}(M) \to
\okf([\Phi^{-1}(\alpha)/T])
\]
defined in~\eqref{eq:orbi-Kirwan-map}
is a surjective ring homomorphism.
\end{theorem}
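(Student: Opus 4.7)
The plan is to observe that essentially all the pieces needed for the proof have already been assembled in the excerpt; what remains is to verify that they fit together. The statement has two parts: that $\kappa$ is a ring homomorphism, and that it is surjective. For the ring homomorphism claim, I would first invoke Lemma~\ref{lemma:level-transverse} to conclude that the inclusion $\iota:\Phi^{-1}(\alpha)\hookrightarrow M$ of the level set is transverse to all $M^t$, $t\in T$. This means $\iota$ meets the hypothesis of Proposition~\ref{prop:ring-map}, so $\iota^*:NK^{\diamond}_T(M)\to NK^{\diamond}_T(\Phi^{-1}(\alpha))$ is a ring homomorphism with respect to the $\kt$-products. Since $T$ acts locally freely on $\Phi^{-1}(\alpha)$, the isomorphism of~\eqref{eq:def-orb-Ktheory} (which is a ring isomorphism by Remark~\ref{remark:integral-lift}) identifies the target with $\okf([\Phi^{-1}(\alpha)/T])$. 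Composing gives that $\kappa$ is a ring homomorphism.

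For surjectivity, I would exploit that $\iota^*$ respects the additive decomposition in~\eqref{eq:iota-on-NK}, so it suffices to prove that each component map $\iota^*:K_T(M^t)\to K_T(\Phi^{-1}(\alpha)^t)$ is surjective. Here the key observation is that since $T$ is abelian, each $M^t$ is a $T$-invariant symplectic submanifold, and standard symplectic arguments give that $(M^t,\omega|_{M^t},\Phi|_{M^t})$ is again a Hamiltonian $T$-space with proper moment map, with $(\Phi|_{M^t})^{-1}(\alpha)=\Phi^{-1}(\alpha)\cap M^t=\Phi^{-1}(\alpha)^t$. I would then check that $\alpha$ is a regular value of $\Phi|_{M^t}$ as well; this follows from the fact that the tangent space to $\Phi^{-1}(\alpha)$ at a point $p\in\Phi^{-1}(\alpha)^t$ intersects $T_pM^t$ in the full tangent space $T_p((\Phi|_{M^t})^{-1}(\alpha))$, which is a standard consequence of the transversality already invoked from Lemma~\ref{lemma:level-transverse}.

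Once regularity is verified on each $M^t$, I would apply the $K$-theoretic Kirwan surjectivity theorem of Harada--Landweber (Theorem~\ref{theorem:ordinary-Kirwan}) to each Hamiltonian $T$-space $M^t$, yielding surjectivity of $\iota^*$ on each summand $K_T(M^t)\to K_T(\Phi^{-1}(\alpha)^t)$. Assembling these over $t\in T$ gives surjectivity of the full map $\iota^*$ on $NK^{\diamond}_T$, and composition with the ring isomorphism to $\okf([\Phi^{-1}(\alpha)/T])$ completes the argument.

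I do not anticipate a serious obstacle: the heavy lifting has been done in Proposition~\ref{prop:ring-map}, Lemma~\ref{lemma:level-transverse}, and Theorem~\ref{theorem:ordinary-Kirwan}. The only mildly subtle point is the verification that $\alpha$ remains a regular value for each restricted moment map $\Phi|_{M^t}$, and that the fixed set of $\Phi^{-1}(\alpha)$ under $t$ coincides with the level set of $\Phi|_{M^t}$ at $\alpha$; both are standard consequences of the transversality of $\Phi^{-1}(\alpha)$ with $M^t$ and the fact that $T$ is abelian.
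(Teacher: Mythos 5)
Your proposal is correct and follows essentially the same route as the paper: ring homomorphism via Lemma~\ref{lemma:level-transverse} plus Proposition~\ref{prop:ring-map}, then summand-by-summand surjectivity by applying Theorem~\ref{theorem:ordinary-Kirwan} to each Hamiltonian $T$-space $(M^t,\omega|_{M^t},\Phi|_{M^t})$ using $\Phi^{-1}(\alpha)^t=(\Phi|_{M^t})^{-1}(\alpha)$. Your extra verification that $\alpha$ remains a regular value of each $\Phi|_{M^t}$ (via the transversality of $\Phi^{-1}(\alpha)$ with $M^t$) is a detail the paper leaves implicit, and it is handled correctly.
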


\begin{proof}
Proposition~\ref{prop:ring-map} and Lemma~\ref{lemma:level-transverse}
guarantee that $\kappa$ is a ring homomorphism,
so it is enough to check that $\iota^*$ is surjective.
Since $\iota^*$ in~\eqref{eq:iota-on-NK} is defined
separately on each summand, the surjectivity follows from surjectivity
on each summand. We observe that for a Hamiltonian
$T$-space $(M, \omega, \Phi)$ as given, for any $t\in T$, the fixed set
\(M^t\)  is itself a Hamiltonian $T$-space with moment map the restriction $\Phi
\vert_{M^t}$. In particular, since \(\Phi^{-1}(\alpha)^t = (\Phi
\vert_{M^t})^{-1}(\alpha),\) Theorem~\ref{theorem:ordinary-Kirwan}
implies that each
\[
\iota^*: K_T(M^t) \to K_T(\Phi^{-1}(\alpha)^t)
\]
is surjective, completing the proof.
\end{proof}

Thus, in order to compute $\okf([\Phi^{-1}(\alpha)/T])$, we must explicitly compute
$NK^{\diamond}_{T}(M)$ and  identify the kernel of $\kappa$. We discuss the kernel in
Section~\ref{subsec:kernel}. As for the domain $NK^{\diamond}_{T}(M)$, we observe that in fact a
different, smaller, ring already surjects onto $NK^{\diamond}_{T}(\Phi^{-1}(\alpha))$. This is
highly relevant for computations, since the domain of the orbifold Kirwan
map~\eqref{eq:orbi-Kirwan-map} is an { infinite} direct sum, while the smaller subring is a {
finite} direct sum (and thus more manageable for explicit computations).

The essential idea is similar to those already developed in \cite{GHK05} and subsequently in
\cite{GolHar06}. Since $T$ acts locally freely on $\Phi^{-1}(\alpha)$ and because
$\Phi^{-1}(\alpha)$ is compact (since $\Phi$ is proper), there are only finitely many orbit types,
and hence only finitely many elements \(t \in T\) occur in the stabilizer of a point in
$\Phi^{-1}(\alpha)$. In particular, $\Phi^{-1}(\alpha)^{t}$ is non-empty for only finitely many
$t$. Thus in the codomain of the map~\eqref{eq:iota-on-NK}, only finitely many summands are
non-zero. It is straightforward to see that the restriction of $\kappa$ to the direct sum
\begin{equation}\label{eq:smaller-codomain}
\bigoplus_{t:\ \Phi^{-1}(\alpha)^{t} \neq \emptyset} K_{T}(M^{t})
\end{equation}
itself surjects onto $NK^{\diamond}_{T}(\Phi^{-1}(\alpha))$.
Unfortunately~\eqref{eq:smaller-codomain} is not closed under the $\kt$ multiplication on
$NK^{\diamond}_{T}(M)$. Hence we introduce the {\bf $\Gamma$-subring of $NK_{T}^{\diamond}(M)$},
which is the smallest subring containing~\eqref{eq:smaller-codomain}; this will also surject onto
$NK^{\diamond}_{T}(\Phi^{-1}(\alpha))$.

Recall that if a torus $T$ acts locally freely on a space $Y$, then by
definition the
stabilizer group $\Stab(y)$ of any point \(y \in Y\) is finite; we call
$\Stab(y)$ a {\bf finite stabilizer group}. Similarly, given a finite
stabilizer group $\Stab(y)$, we call an element \(t \in \Stab(y)\) a
{\bf finite stabilizer element}. Let $\Gamma$ denote the subgroup of
$T$ generated by all finite stabilizer elements, called the {\bf
finite stabilizer subgroup} of $T$ associated to $Y$.  For any subgroup
$\Gamma$ of $T$ we may define the following.

\begin{definition}
Let $Y$ be a stably 
complex $T$-space and let $\Gamma$ be a subgroup of $T$. Then
\begin{equation}\label{eq:def-Gammasubring}
NK^{ \Gamma}_{T}(Y) := \bigoplus_{t \in \Gamma} K_{T}(Y^{t})
\end{equation}
is a subring of $NK^{\diamond}_{T}(Y)$, called the {\bf $\Gamma$-subring} of $NK^{\diamond}_{T}(Y)$.
\end{definition}

\begin{remark}
The $\Gamma$-subring is closed under the $\kt$ multiplication.  This
follows immediately from Definition~\ref{def:productNK_T} and the
comment after it, together with the fact that $\Gamma$ is a subgroup
and thus closed under multiplication.
\end{remark}

In the case of the level set $\Phi^{-1}(\alpha)$ of a Hamiltonian $T$-action $(M, \omega, \Phi)$
with a proper moment map $\Phi$, this associated subgroup $\Gamma$ will be a finite subgroup of
$T$.\footnote{Although we do not make it explicit in the notation, this subgroup $\Gamma$ depends on the choice of level set $\Phi^{-1}(\alpha)$. In
\cite{GHK05} the subgroup $\Gamma$ is chosen such that $\kappa^{\Gamma}$ is surjective for any
choice of level set, so our choice differs slightly from that of Goldin, Holm, and Knutson.}  Hence
the direct sum in~\eqref{eq:def-Gammasubring} is also finite. The preceding discussion establishes
the following.

\begin{theorem}\label{theorem:orbi-Kirwan-surj-Gamma}
Let $(M, \omega, \Phi)$ be a Hamiltonian $T$-space with proper moment
map \(\Phi: M \to \t^{*}.\) Assume that $\alpha \in \t^{*}$ is a
regular value of $\Phi$, so that $T$ acts locally freely on
$\Phi^{-1}(\alpha)$. Let $\Gamma$ be the finite stabilizer subgroup of
$T$ associated to $\Phi^{-1}(\alpha)$. Then the
restriction $\kappa^\Gamma$ of the orbifold Kirwan
map~\eqref{eq:orbi-Kirwan-map} to the
$\Gamma$-subring~\eqref{eq:def-Gammasubring},
\[
\kappa^{\Gamma}: NK^{\Gamma}_{T}(M) \to NK^{\diamond}_{T}(\Phi^{-1}(\alpha)) \cong \okf([\Phi^{-1}(\alpha)/T]),
\]
is a surjective ring homomorphism.
\end{theorem}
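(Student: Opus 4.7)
The plan is to reduce the statement to a combination of Theorem~\ref{theorem:orbi-Kirwan-surj} with a simple indexing observation, together with one application of ordinary Kirwan surjectivity (Theorem~\ref{theorem:ordinary-Kirwan}) on each summand. First I would verify that $\kappa^\Gamma$ is a ring homomorphism. By the Remark immediately preceding the theorem, $NK^\Gamma_T(M)$ is closed under the $\kt$ product and hence is a subring of $NK^\diamond_T(M)$. Since Theorem~\ref{theorem:orbi-Kirwan-surj} asserts that $\kappa$ is itself a ring homomorphism, the restriction $\kappa^\Gamma$ to the subring $NK^\Gamma_T(M)$ inherits this property automatically.

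Next I would establish the key indexing observation: the codomain $NK^\diamond_T(\Phi^{-1}(\alpha))$ has only finitely many nonzero summands, each indexed by an element of $\Gamma$. Indeed, if $\Phi^{-1}(\alpha)^t$ is non-empty, pick any $y$ in it; then $t \in \Stab(y)$ is a finite stabilizer element and so lies in $\Gamma$ by definition. Thus
\[
NK^\diamond_T(\Phi^{-1}(\alpha)) \;=\; \bigoplus_{t \in \Gamma} K_T(\Phi^{-1}(\alpha)^t),
\]
and we may regard $\kappa^\Gamma$ as a map between direct sums indexed by the same finite set $\Gamma$.

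For surjectivity, I would argue summand-by-summand. On the $t$-summand, $\kappa^\Gamma$ restricts to the ordinary restriction map $\iota^*: K_T(M^t) \to K_T(\Phi^{-1}(\alpha)^t)$. For each $t \in \Gamma$, the triple $(M^t, \omega|_{M^t}, \Phi|_{M^t})$ is itself a Hamiltonian $T$-space (since $T$ is abelian); the moment map $\Phi|_{M^t}$ is proper because $M^t$ is closed in $M$; and $\alpha$ is a regular value of $\Phi|_{M^t}$ because $T$ acts locally freely on $\Phi^{-1}(\alpha) \cap M^t = (\Phi|_{M^t})^{-1}(\alpha)$, inherited from the local freeness on $\Phi^{-1}(\alpha)$. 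Theorem~\ref{theorem:ordinary-Kirwan} then yields surjectivity of $\iota^*$ on each such summand, which together give the surjectivity of $\kappa^\Gamma$ onto $NK^\diamond_T(\Phi^{-1}(\alpha)) \cong \okf([\Phi^{-1}(\alpha)/T])$.

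There is no substantive obstacle here: the entire content is already contained in Theorem~\ref{theorem:orbi-Kirwan-surj} together with the elementary observation that elements of $T$ which fix a point of $\Phi^{-1}(\alpha)$ must lie in the finite subgroup $\Gamma$. The only point worth double-checking is that $\Gamma$ is indeed a subgroup of $T$, which is immediate from its definition as the subgroup generated by all finite stabilizer elements, so the direct sum~\eqref{eq:def-Gammasubring} carries a well-defined $\kt$ product.
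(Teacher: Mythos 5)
Your proposal is correct and follows essentially the same route as the paper: the paper also deduces the result from Theorem~\ref{theorem:orbi-Kirwan-surj} combined with the observation that $\Phi^{-1}(\alpha)^t\neq\emptyset$ forces $t$ to be a finite stabilizer element (hence in $\Gamma$), so that all nonzero summands of the codomain are already hit by the $\Gamma$-subring, which is closed under $\kt$. Your extra care in verifying the hypotheses of Theorem~\ref{theorem:ordinary-Kirwan} on each $M^t$ is a welcome but not substantively different addition.
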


\subsection{The kernel of the orbifold Kirwan map}\label{subsec:kernel}

Theorem~\ref{theorem:orbi-Kirwan-surj-Gamma} shows that the inclusion
of the level set \(\Phi^{-1}(\alpha) \into M\) induces a surjective
ring homomorphism \(\kappa^\Gamma\) from the $\Gamma$-subring
$NK_T^{\Gamma}(M)$ to $NK_T(\Phi^{-1}(\alpha)) \cong
\okf([\Phi^{-1}(\alpha)/T])$. So to compute explicitly the ring
$\okf([\Phi^{-1}(\alpha)/T])$, it then remains to compute the domain
$NK_T^{\Gamma}(M)$ and the kernel $\ker(\kappa^{\Gamma})$ of the
orbifold Kirwan map.

We begin with some comments on the computation of the domain
$NK^{\Gamma}_{T}(M)$. In a large class of examples,
the $T$-equivariant $K$-theory of the original Hamiltonian $T$-space
$M$ is well-known to be explicitly computable.
For example, following  the Delzant construction of symplectic toric orbifolds, the
original Hamiltonian space $M$ is just $\C^{N}$, an affine space
equipped with a linear $T$-action, so in particular is
$T$-equivariantly contractible. In this case, then, the
domain \(NK^{\Gamma}_{T}(M=\C^{N})\) is simply a direct sum of $|\Gamma|$ copies of $K_{T}(\pt) \cong
R(T)$, as an $R(T)$-module.

Another important class of examples are {\bf GKM spaces}.
Suppose that the original Hamiltonian $T$-space $(M, \omega, \Phi)$ is {\bf
GKM} in the sense of  \cite{HHH05} or \cite{HL-kernel}.
Each fixed point set $M^{t}$ is then also GKM.
A specific class of examples are the homogeneous spaces $G/T$
of compact connected Lie groups $G$ with maximal torus $T$, considered
as a Hamiltonian $T$-space with respect to the natural left action of
the maximal torus $T$ (see e.g. \cite[Lemma 8.2]{GHK05}).
In this situation, the results of Section~\ref{subsec:Hamiltonian} imply that the natural restriction
$$
i_{NK}^*: (NK^\diamond_T(M), \kt) \rightarrow (K_T(M^T)\otimes \Z[T], \star)
$$
is injective.
Furthermore, the image of
this injection can be explicitly and combinatorially described using
GKM (``Goresky-Kottwitz-MacPherson'') theory.  Indeed, GKM-type
techniques in equivariant cohomology were already used in
\cite[Section 8]{GHK05} in order to give explicit computations
associated to flag manifolds in inertial cohomology; the $K$-theoretic methods using
GKM theory in $K$-theory (as in \cite{HHH05, HL-kernel}, and
references therein) are analogous.

Now we turn to the computation of the kernel of the orbifold Kirwan map $\kappa$. First observe (as
in the proof of Theorem~\ref{theorem:orbi-Kirwan-surj}) that for each \(t \in \Gamma,\) $M^t$ is
itself a Hamiltonian $T$-space, with moment map the restriction of $\Phi$ to $M^t$. Thus, for each
\(t \in \Gamma,\) the map induced by inclusion
\[
\kappa_{t}:= \iota^*: K_{T}(M^{t}) \to K_{T}(\Phi^{-1}(\alpha)^{t})
\]
is precisely the ordinary (non-orbifold)
Kirwan map for  the
Hamiltonian $T$-space $(M^t, \omega \vert_{M^t}, \Phi
\vert_{M^t})$. The kernel of the ordinary Kirwan map for an abelian
symplectic quotient has been explicitly described in \cite{HL-kernel}
following previous work in cohomology of Tolman and Weitsman
\cite{TW03}.  More specifically, \cite[Theorem 3.1]{HL-kernel} gives a
list of ideals in $K_{T}(M^{t})$ which generates $\ker(\kappa_{t})$;
for reference, we include the statement below. We fix once and for all
a choice of inner product on $\t^*$, with respect to which we define
the norm-square $\|\Phi\|^2: M \to \R$ and identify \(\t \cong \t^*.\)

\begin{theorem}\label{theorem:kernel} (\cite[Theorem 3.1]{HL-kernel})
Let $(M, \omega, \Phi)$ be a Hamiltonian $T$-space. Suppose there
exists a component of $\Phi$ which is proper and bounded below, and
further suppose that $M^T$ has only finitely many connected
components. Let
\begin{equation}\label{eq:def-Z}
Z := \{ \Phi(C) \hspace{1mm} \vert \hspace{1mm} C \hspace{1mm} \mbox{a
connected component of} \hspace{1mm} \Crit(\|\Phi\|^2) \subseteq M \}
\subseteq \t^* \cong \t
\end{equation}
be the set of images under $\Phi$ of components of the critical set of
$\|\Phi\|^2$. For \(\xi \in \t,\) define
\begin{eqnarray*}
M_{\xi} &  := & \{ x \in M \, | \, \left<\mu(x), \xi\right> \leq 0 \}, \\
\Kernel_{\xi} & := & \{ \alpha \in K^*_T(M) \, | \,  \alpha |_{M_{\xi}} = 0 \}, \quad \mbox{and} \\
\Kernel & := & \sum_{\xi \in Z \subseteq \t} \Kernel_{\xi}.
\end{eqnarray*}
Then there is a short exact sequence
\[
\xymatrix{
0 \ar[r] & \Kernel \ar[r] & K_T^*(M) \ar[r]^-{\kappa} & K^*_T(\Phi^{-1}(0)) \ar[r] & 0,
}
\]
where \(\kappa: K^*_T(M) \to K^*_T(\Phi^{-1}(0))\) is the Kirwan map.
\end{theorem}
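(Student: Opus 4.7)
The plan is to establish the short exact sequence by proving the two inclusions $\Kernel \subseteq \ker(\kappa)$ and $\ker(\kappa) \subseteq \Kernel$ separately, with the second being the substantive direction.

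For the easy inclusion, I would observe that for every $\xi \in \t$ one has $\Phi^{-1}(0) \subseteq M_{\xi}$, since any $x \in \Phi^{-1}(0)$ satisfies $\langle \Phi(x), \xi \rangle = 0 \leq 0$. Hence any class vanishing on $M_{\xi}$ automatically restricts to zero on $\Phi^{-1}(0)$, giving $\Kernel_{\xi} \subseteq \ker(\kappa)$ for all $\xi$, and so $\Kernel \subseteq \ker(\kappa)$.

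For the reverse inclusion, I would adapt the Tolman--Weitsman strategy from equivariant rational cohomology to integral equivariant $K$-theory, using $K$-theoretic equivariant Morse theory applied to the norm-square $f := \|\Phi\|^2 \colon M \to \R$. The hypothesis that a component of $\Phi$ is proper and bounded below together with finiteness of the connected components of $M^T$ ensures that $f$ is a $T$-invariant Morse--Bott function whose negative gradient flow exists for all time, whose critical set $\Crit(f)$ decomposes into finitely many (or at least manageably many) $T$-invariant components $C$, and whose set of critical values is well-ordered. Each component $C$ of $\Crit(f)$ is a component of $M^{H}$ for some subtorus $H \subseteq T$, and under the Morse--Bott identification $C$ lies in the subset $\{x : \langle \Phi(x), \xi\rangle \geq \|\xi\|^2\}$ with $\xi = \Phi(C)$. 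The Morse-theoretic filtration of $M$ by super-level sets $M^{\geq c} := \{f \geq c\}$ thus refines the filtration by $M_{\xi}$-complements, and at each critical level the associated graded piece is computed by the Thom isomorphism applied to the negative normal bundle $\nu^{-}(C, M)$.

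The core inductive step, which is also the main obstacle, is this: given $\alpha \in \ker(\kappa)$, one processes critical components $C$ in order of increasing critical value, and at each stage one modifies $\alpha$ by subtracting a class in $\Kernel_{\xi}$ (with $\xi = \Phi(C)$) so that the modified class vanishes on the open super-level set just above the current critical value. The existence of such a correction class requires an integral $K$-theoretic analogue of the Atiyah--Bott lemma, namely that the $K$-theoretic equivariant Euler class $\lambda_{-1}(\nu^{-}(C, M)^{*})$ is a non-zero-divisor in $K_{T}(C)$, so that any obstruction living on $C$ lifts to a global $T$-equivariant class supported away from the lower super-level set. Here I would invoke precisely the $K$-theoretic Morse-theoretic machinery developed by Harada and Landweber in \cite{HL-kernel}, which provides the relevant Thom isomorphisms and divisibility results in integral equivariant $K$-theory. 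Once this inductive procedure terminates (which is guaranteed by the properness assumption, since only finitely many critical values can be relevant to a class supported on a compact piece), the original $\alpha$ is expressed as a finite sum of elements of the various $\Kernel_{\xi}$ with $\xi \in Z$, completing the proof.
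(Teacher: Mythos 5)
This theorem is not proved in the paper at all: it is quoted verbatim from Harada--Landweber \cite{HL-kernel} (their Theorem 3.1), so there is no internal proof to compare your attempt against. That said, your outline does reproduce the strategy of the cited source, namely the Tolman--Weitsman argument \cite{TW03} transplanted to integral equivariant $K$-theory: the easy inclusion $\Kernel \subseteq \ker(\kappa)$ via $\Phi^{-1}(0) \subseteq M_{\xi}$ for every $\xi$, and the reverse inclusion by equivariant Morse theory on $\|\Phi\|^2$, inducting over critical levels and, at each critical component $C$, correcting the class by an element of $\Kernel_{\Phi(C)}$ supported on the flow-up of $C$, with a $K$-theoretic Atiyah--Bott lemma (the Euler class $\lambda_{-1}(\nu^{-}(C,M)^{*})$ is not a zero divisor in $K_T(C)$) supplying the required lift.

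Two caveats. First, as a standalone proof your write-up is circular: the non-zero-divisor property of the negative normal bundle's Euler class and the Morse-theoretic exact sequences in \emph{integral} equivariant $K$-theory are precisely the technical content of \cite{HL-kernel}, so invoking that paper for these inputs amounts to citing the theorem in order to prove it. If the goal is only to explain why the result holds, this is acceptable (and matches what the authors themselves do); if the goal is an independent proof, the Atiyah--Bott lemma and the Thom isomorphism arguments must be supplied. Second, there is a directional slip in the inductive step: since $\Phi^{-1}(0)$ is the \emph{minimum} of $\|\Phi\|^2$, the corrected class should be arranged to vanish on ever larger \emph{sub}-level sets $\{\|\Phi\|^2 \le c\}$, not on super-level sets $M^{\ge c}$ as written; and the termination of the induction on a possibly noncompact $M$ requires more than the remark that ``only finitely many critical values are relevant to a compact piece'' --- one needs that a $T$-equivariant $K$-class on $M$ is determined by its restrictions to the compact sub-level sets (an inverse-limit statement), which should be made explicit rather than waved at.
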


\begin{remark}
Although the statement of \cite[Theorem 3.1]{HL-kernel} explicitly
refers to the symplectic quotient $M \mod_{\! \alpha} T$, it is straightforward to
see that the theorem in fact holds at the level of the $T$-equivariant
$K$-theory of the level set $\Phi^{-1}(0)$. Moreover, although
\cite[Theorem 3.1]{HL-kernel} is (for convenience) stated only for the $0$-level set
$\Phi^{-1}(0)$, since $T$-moment maps
are determined only up to an additive constant, it is
straightforward to see that the analogous statement also holds for the case
of non-zero regular values \(\alpha \in \t^*\) and its corresponding
level set \(\Phi^{-1}(\alpha).\)
\end{remark}

Using Theorem~\ref{theorem:kernel} and localization
techniques as described  in \cite[Section
2]{HL-kernel}, an explicit list of generators for the kernel of
$\kappa_t$ may be constructed for each \(t \in \Gamma.\)
Since
\[
\ker(\kappa^\Gamma) = \bigoplus_{t \in \Gamma} \ker(\kappa_{t}),
\]
the kernel of the restricted orbifold Kirwan map $\kappa^\Gamma$ is therefore obtained by computing each $\ker(\kappa_t)$ separately. This completes the explicit description of $\okf([\Phi^{-1}(\alpha)/T])$ in terms of generators and relations for a wide class of abelian symplectic quotients.

\section{Example: the
full orbifold $K$-theory of weighted projective spaces}\label{sec:examples}

We now give a complete description of the ring structure of the full orbifold $K$-theory of weighted projective spaces obtained as symplectic quotients of $\C^n$ by $S^1$, closing with the worked example of $\P(1,2,4)$.  We do not require any mention of a stacky fan as in \cite{BCS05} or of labelled polytopes as in \cite{LT97}.  The technique here is different from that in \cite{BecUri07} because we use the fact that these spaces occur as symplectic quotients. We also avoid using the Chern character isomorphism, allowing us to obtain results over $\Z$.  In particular, we prove in Proposition~\ref{prop:torsionfree} that $\okf([M])$ has no additive torsion for those weighed projective spaces $[M]$ obtained as symplectic quotients by a (connected) circle.

Recall that such a weighted projective space is specified by an
integer vector in $\Z^{n+1}_{>0}$:
\[
b = (b_0, b_{1}, b_{2}, \cdots, b_{n}), \quad b_{k} \in \Z, b_{k} > 0.
\]
The vector $b$ determines an action of $S^1$ on $\C^{n+1}$, defined by
\[
t \cdot (z_0, z_1, \ldots, z_n) := (t^{b_0}z_0, t^{b_1}z_1, \ldots,
t^{b_n}z_n),
\]
for \(t \in S^1\) and \((z_0, z_1, \ldots, z_n) \in \C^{n+1}.\) An
$S^1$-moment map for this action is given by
\[
\Phi(z_0, z_1, \ldots, z_n) = - \frac{1}{2} \sum_{k=0}^n b_k
\|z_k\|^2.
\]
This is clearly proper and its negative bounded below. Moreover, since the $b_k$ are positive, the
only $S^1$-fixed point in $\C^{n+1}$ is the origin \(\{0\} \in \C^{n+1};\) in particular,
$(\C^{n+1})^{S^1}$ has only finitely many connected components. Any negative moment map value is
also regular, so we may define the {\bf weighted projective space} $\P^n_b$ (sometimes also denoted
$\P^n(b)$) as the orbifold arising as a symplectic quotient
\[
\P^n_b := \C^{n+1} \mod_{\! \alpha} S^1 = [\Phi^{-1}(\alpha)/S^1]
\]
for a regular (negative) value $\alpha$.  The differential structure does not change as $\alpha$ varies, though the symplectic volume does.  We note that $\P^n_b$ may be a { non-effective} orbifold: if the integers $b_i$ are not relatively prime, i.e. \(g = \gcd(b_0, b_1, \ldots, b_n) \neq 1,\) then there is a global stabilizer isomorphic to the cyclic group $\Z_g$. Effective or not, however, all the hypotheses of the theorems in Section~\ref{sec:surjectivity} are satisfied for these weighted projective spaces.

We begin by computing the finite stabilizer subgroup \(\Gamma \subseteq S^1\) for
$\Phi^{-1}(\alpha)$. Throughout, we will denote by $\Z_{\ell}$ the cyclic subgroup in $S^1$ given
by the $\ell$-th roots of unity \(\{e^{2\pi is/\ell}\ |\ s=0,\dots,\ell-1\}\) in $S^1$. Given a
non-zero vector \(z = (z_0, z_1, \ldots, z_n)\) in $\C^{n+1}$, the stabilizer subgroup of $z$ in
$S^1$ is precisely
\[
\Gamma_z := \bigcap_{z_k \neq 0} \Z_{b_k} \subseteq S^1.
\]
In particular, this implies that $\Gamma$ is generated by the
subgroups $\Z_{b_k}$ for each $k$, \(0 \leq k \leq n,\) so \(\Gamma =
\Z_{\ell} \subseteq S^1\) where
\[
\ell = \lcm(b_0, b_1, \ldots, b_n).
\]
Let \(\zeta_s := e^{2\pi i s/\ell} \in S^1.\) Then,
by definition, the $\Gamma$-subring of the inertial $K$-theory of the
$S^1$-space $\C^{n+1}$ is additively defined to be
\[
NK^{\Gamma}_{S^1}(\C^{n+1}) := \bigoplus_{s=0}^{\ell}
K_{S^1}((\C^{n+1})^{\zeta_s}) \cong \bigoplus_{s=0}^{\ell}
K_{S^1}(\pt) = \bigoplus_{s=0}^{\ell} R(S^1) = \bigoplus_{s=0}^{\ell}
\Z[u,u^{-1}],
\]
where $R(S^1)$ denotes the representation ring of $S^1$ and we use $u$
as the variable in $R(S^1)$. For the first isomorphism above, we use
the fact that the $S^1$-action on $\C^{n+1}$ is linear, so any
fixed-point set for any group element is an $S^1$-invariant affine
subspace of $\C^{n+1}$, hence $S^1$-equivariantly contractible to a
point.

We denote
by $\alpha_s$ the element in $NK^{\Gamma}_{S^1}(\C^{n+1})$ which is
the identity \(1 \in K_{S^1}(\pt) \cong R(S^1)\) in the summand
corresponding to $\zeta_s$ and is $0$ elsewhere. These are clearly
additive $K_{S^1}$-module generators of
$NK^{\Gamma}_{S^1}(\C^{n+1})$. Hence,
in order to determine the
ring structure of the $\Gamma$-subring of the inertial $K$-theory, it
suffices
to calculate the products $\alpha_s \kt \alpha_t$, for \(0 \leq s,t
< \ell,\) in inertial $K$-theory. Since $\C^{n+1}$ with the given
$S^1$-action is a Hamiltonian $S^1$-space with a component of the
moment map which is proper and bounded below (and the fixed point set has finitely many
connected components), from
Section~\ref{sec:definition} we know that the map $\iota_{NK}^*$ is
injective (and in fact, in this case, is an isomorphism).
Hence we may use for
our computations the
$\star$ product on inertial $K$-theory as defined in
Section~\ref{sec:definition}, instead of the $\kt$ product.

For any integer \(s \in \Z,\) let $[s]$ denote the smallest
non-negative integer congruent to $s$ modulo $\ell$. Also let
\(\langle q \rangle := q - \lfloor q \rfloor\) denote the fractional
part of any rational number \(q \in \Q.\)
In our case, the logweight of an element \(\zeta_s \in \Gamma\) acting
on the $k$-th coordinate may be explicitly computed to be
\[
a_k(\zeta_s) = \frac{[b_k s]}{\ell} = \left\langle \frac{b_k s}{\ell} \right\rangle.
\]
Hence $\zeta_s$ acts on the $k$-th coordinate as \(e^{2\pi i
  a_k(\zeta_s)}.\) By the formula for the $\star$ product in Definition~\ref{def:starproduct},
we immediately obtain the relation
\begin{equation}\label{eq:prod-inertial-wps}
\alpha_s \star \alpha_{s'}  = \alpha_{[s+s']} \left( \prod_{k=0}^n
\left(1-u^{-b_k}\right)^{a_k(\zeta_s)+a_k(\zeta_{s'})-a_k(\zeta_s \zeta_{s'})} \right)
\end{equation}
among the generators of the twisted sectors, where we have used that
the $S^1$-equivariant $K$-theoretic Euler class of the
$S^1$-equivariant bundle \(\C_{\lambda} \to \pt\) of $S^1$-weight
$\lambda \in \Z$ is
\[
\epsilon_{S^1}(\C_{\lambda}) = 1 - u^{- \lambda} \in R(S^1) = \Z[u,u^{-1}].
\]
Hence the $\Gamma$-subring may be described as
\begin{equation}\label{eq:inertial-wps}
NK^{\Gamma}_{S^1}(\C^{n+1}) \cong \Z[u,u^{-1}][\alpha_0, \ldots,
  \alpha_{\ell}] \bigg/ {\mathcal I},
\end{equation}
where ${\mathcal I}$ is the ideal generated by the
relations~\eqref{eq:prod-inertial-wps} for all \(0 \leq s,s' \leq \ell-1,\)
i.e.
\begin{equation}\label{eq:relations-inertial-wps}
{\mathcal I} := \left\langle \alpha_s\alpha_{s'} - \left(
\prod_{k=0}^n
\left(1-u^{-b_k}\right)^{a_k(\zeta_s)+a_k(\zeta_{s'})-a_k(\zeta_s
\zeta_{s'})}\right) \alpha_{[s+{s'}]}
\, \bigg\vert \, \, 0 \leq s,s' \leq \ell-1 \right\rangle.
\end{equation}
In order to obtain the orbifold $K$-theory of the symplectic quotient
$\P^n_b$, we must now compute the kernel of the $K$-theory Kirwan map
for each sector, i.e.
\[
\kappa_s: K_{S^1}((\C^{n+1})^{\zeta_s}) \to
K_{S^1}((\Phi^{-1}(-1))^{\zeta_s})
\]
for each \(0 \leq s \leq \ell-1.\) (Here we have chosen to reduce at
the regular value $-1$.) As mentioned above, since each
$(\C^{n+1})^{\zeta_s}$ is itself a Hamiltonian $S^1$-space and its
symplectic quotient by this $S^1$ is again a toric variety, we may
apply \cite[Theorem 3.1]{HL-kernel} to
obtain
\begin{equation}\label{eq:kernel-Kirwan-wps}
\ker(\kappa_s) = \left\langle \prod_{k: a_k(\zeta_s)=0, 0 \leq k \leq n}
(1-u^{-b_k}) \right\rangle.
\end{equation}

Here we use the fact that the $k$-th coordinate line in $\C^{n+1}$ is
fixed by $\zeta_s$ if and only if \(a_k(\zeta_s) = 0,\) and that in this
case (where we have just an $S^1$-action, not a larger-dimensional
torus) the negative normal bundle with respect to $\Phi$ is in fact
all of the tangent space to the fixed point $\{0\}$ in
$(\C^{n+1})^{\zeta_s}$.

Hence we conclude that the full orbifold $K$-theory
of the weighted projective space $\P^n_b$ is given by
\[
\okf(\P^n_b) = \Z[u,u^{-1}, \alpha_0, \ldots, \alpha_{\ell}] \bigg/
{\mathcal I} + {\mathcal J},
\]
where ${\mathcal I}$ is the ideal in~\eqref{eq:relations-inertial-wps}
and
\[
{\mathcal J} = \left\langle \prod_{k:a_k(\zeta_s)=0, 0 \leq k \leq n}
(1-u^{-b_k}) \, \bigg\vert \, 0 \leq s \leq \ell-1 \right\rangle.
\]

Simple algebra then shows that the full orbifold
$K$-theory of a weighted projective spaces is torsion-free.

\begin{proposition}\label{prop:torsionfree}
The full orbifold $K$-theory ring of a weighted projective space $\P^n_b$ 
obtained as a symplectic quotient
does not contain (additive $\Z$) torsion.
\end{proposition}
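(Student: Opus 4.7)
The plan is to exhibit $\okf(\P^n_b)$ as a direct sum of cyclic $\Z[u,u^{-1}]$-modules, each of which is free as a $\Z$-module. The crucial structural input is the $\Z/\ell$-grading of inertial $K$-theory by twisted sector: from Definition~\ref{def:productNK_T} (or, equivalently, from the product formula~\eqref{eq:prod-inertial-wps}) the $\kt$-product of a $\zeta_s$-class with a $\zeta_{s'}$-class lies in the $\zeta_{[s+s']}$-sector. Consequently the relations in $\mathcal{I}$ are $\Z/\ell$-homogeneous, and after reducing modulo $\mathcal{I}$ the ring $NK^{\Gamma}_{S^1}(\C^{n+1})$ becomes additively the free $\Z[u,u^{-1}]$-module $\bigoplus_{s=0}^{\ell-1} \Z[u,u^{-1}] \cdot \alpha_s$. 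The ideal $\mathcal{J}$, read in light of the component-wise description of the orbifold Kirwan map in~\eqref{eq:iota-on-NK} and the kernel computation~\eqref{eq:kernel-Kirwan-wps}, cuts each sector down to $\Z[u,u^{-1}]/\langle \prod_{k\in S_s}(1-u^{-b_k})\rangle$, where $S_s := \{k : a_k(\zeta_s) = 0\}$. This gives, as a $\Z$-module,
\[
\okf(\P^n_b) \;\cong\; \bigoplus_{s=0}^{\ell-1} \Z[u,u^{-1}]\bigg/\Big\langle \prod_{k\in S_s}(1-u^{-b_k})\Big\rangle,
\]
and it suffices to prove each summand is $\Z$-free.

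For a fixed $s$, write $B_s := \sum_{k \in S_s} b_k$ and $g_s(u) := \prod_{k\in S_s}(1-u^{-b_k})$. Using the identity $1-u^{-b_k} = -u^{-b_k}(u^{b_k}-1)$, I factor $g_s = \pm u^{-B_s} h_s(u)$ with $h_s(u) := \prod_{k\in S_s}(u^{b_k}-1) \in \Z[u]$. Since $u^{-B_s}$ is a unit in $\Z[u,u^{-1}]$, the ideals $(g_s)$ and $(h_s)$ coincide. The polynomial $h_s$ is monic of degree $B_s$ with constant term $\pm 1$; the latter implies that $u$ is a unit modulo $h_s$ (solve $h_s = \pm 1 + u\tilde{h}_s$ for the inverse), so the natural map $\Z[u]/(h_s) \to \Z[u,u^{-1}]/(h_s)$ is an isomorphism. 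Because $h_s$ is monic, $\Z[u]/(h_s)$ is a free $\Z$-module of rank $B_s$, and the conclusion follows by taking direct sums.

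The main step that requires care is the sectorwise description of $\mathcal{I}+\mathcal{J}$: while the $\mathcal{I}$-relations are manifestly $\Z/\ell$-homogeneous, one must invoke Theorem~\ref{theorem:kernel} applied to each fixed locus $(\C^{n+1})^{\zeta_s}$, together with the decomposition $\ker(\kappa^\Gamma)=\bigoplus_s \ker(\kappa_s)$, to justify that the relation imposed in the $\alpha_s$-component is precisely $g_s(u)\alpha_s=0$. Once that sector decomposition is in hand, the unit-clearing argument reducing to $\Z[u]/(\text{monic})$ is entirely elementary.
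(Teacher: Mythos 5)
Your proposal is correct, and it follows the paper's reduction exactly up to the last step: both arguments first use $\ker(\kappa^\Gamma)=\bigoplus_s\ker(\kappa_s)$ and the sector-wise kernel computation~\eqref{eq:kernel-Kirwan-wps} to write $\okf(\P^n_b)$ additively as $\bigoplus_s \Z[u,u^{-1}]/\bigl\langle \prod_{k\in S_s}(1-u^{-b_k})\bigr\rangle$, so torsion-freeness reduces to a statement about each cyclic summand. Where you diverge is in how you handle that summand. The paper argues by unique factorization in $\Z[u,u^{-1}]$: if $mF=\tau\cdot\prod(1-u^{-b_k})$ with $m\geq 2$, then $m$ must divide $\tau$, so $F$ already lies in the ideal. (As written this is slightly terse --- one needs that no prime factor of $m$ divides $\prod(1-u^{-b_k})$, which holds because that element is a unit times the primitive polynomial $\prod(u^{b_k}-1)$.) You instead clear the unit $u^{-B_s}$ to replace the generator by the monic polynomial $h_s=\prod_{k\in S_s}(u^{b_k}-1)$, observe that its constant term $\pm1$ makes $u$ invertible in $\Z[u]/(h_s)$ so that $\Z[u]/(h_s)\cong\Z[u,u^{-1}]/(h_s)$, and conclude that each summand is a \emph{free} $\Z$-module of rank $B_s=\sum_{k\in S_s}b_k$. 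Your version buys more: it gives an explicit finite $\Z$-basis and the additive rank of each sector, not just absence of torsion, and it sidesteps the primitivity point the paper leaves implicit. The only step deserving the care you already flag is the sector-wise description of $\mathcal{I}+\mathcal{J}$, and your appeal to the $\Z/\ell$-homogeneity of the relations in $\mathcal{I}$ together with the direct-sum decomposition of $\ker(\kappa^\Gamma)$ handles it.
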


\begin{proof}
It is sufficient to check that each summand
\[
K^*_{S^1}((\Phi^{-1}(1))^{\zeta_s})
\]
is torsion-free over $\Z$.  This piece is precisely the ring
\begin{equation}\label{eq:wps-piece}
A = \frac{\Z[u,u^{-1}]}{\left \langle \prod_{k: a_k(\zeta_s)=0, 0 \leq k \leq n}
(1-u^{-b_k}) \right\rangle}.
\end{equation}
Now suppose that $f\in A$ is a torsion class: that is, there is an
integer $m\geq 2$ satisfying $m\cdot f = 0$ in $A$.  Let $F\in
\Z[u,u^{-1}]$ be a representative of $f$.  Then $mF$ must be in the
ideal in the denominator of \eqref{eq:wps-piece}.  But $\Z[u,u^{-1}]$
is a unique factorization domain, and $m$ is not a unit in
$\Z[u,u^{-1}]$. Thus, since
\[
mF = \tau \cdot  \prod (1-u^{-b_k}),
\]
unique factorization implies that $\tau$ is multiple of $m$.  Thus we
deduce that $F$ itself is in the ideal, and hence $f = 0$ in $A$.
\end{proof}

\subsection{A worked example: $\P^2_{(1,2,4)}$}

We now illustrate the computations above using the specific weighted projective space
$\P^2_{(1,2,4)}$, the orbifold that is the symplectic quotient of $\C^3$ equipped with the
$S^1$-action
\[
t \cdot (z_0, z_1, z_2) := (tz_0, t^2z_1, t^4z_2).
\]
for \(t \in S^1, (z_0, z_1, z_2) \in \C^3.\)
We will
denote the weight spaces of this $\C^3$ by \(\C_{(1)},
\C_{(2)}, \C_{(4)}\) respectively.
In this case \(\ell = \lcm(1,2,4)=4\) so \(\Gamma \cong \Z_4,\)
generated by \(e^{i2\pi/4} = i\in S^1.\) The
following chart contains the necessary information for computing
the inertial $K$-theory of $\C^3$.

\renewcommand{\arraystretch}{1.3}
\begin{equation}\label{eq:P2-124-chart}
\begin{array}{c||c|c|c|c|}
s & 0 & 1 & 2 & 3 \\ \hline \hline
\zeta_s & 1 & i & -1 & -i \\ \hline
(\C^3)^{\zeta_s} & \C^3 & \C_{(4)} & \C_{(2)} \oplus \C_{(4)} & \C_{(4)} \\
  \hline
a_1(\zeta_s) & 0 & \frac{1}{4} & \frac{1}{2} & \frac{3}{4} \\ \hline
a_2(\zeta_s) & 0 & \frac{1}{2} & 0 & \frac{1}{2} \\ \hline
a_3(\zeta_s) & 0 & 0 & 0 & 0 \\ \hline
\genfrac{}{}{0pt}{0}{\mbox{generator of}}{\zeta_s \, \, \mbox{sector}} &
\alpha_0 & \alpha_1 & \alpha_2 & \alpha_3 \\ \hline
\end{array}
\end{equation}

Using the formula~\eqref{eq:prod-inertial-wps} given above, we
immediately conclude that the product structure in the inertial
$K$-theory of $\C^3$ is given by the following multiplication
table. Recall that $\alpha_0$, being the generator of the untwisted
(identity)
sector, is the multiplicative identity in the ring, so we need not
include it in this table.

\renewcommand{\arraystretch}{1.3}
\begin{equation}\label{eq:P2-124-mult-table}
\begin{array}{c||c|c|c|}
 & \alpha_1 & \alpha_2 & \alpha_3  \\ \hline \hline
\alpha_1  & (1-u^{-2})\alpha_2 & \alpha_3 & (1-u^{-1})(1-u^{-2})\alpha_0  \\ \hline
\alpha_2 &  & (1-u^{-1})\alpha_0 & (1-u^{-1})\alpha_1 \\  \hline
\alpha_3  &  &  & (1-u^{-1})(1-u^{-2})\alpha_2 \\ \hline
\end{array}
\end{equation}

Let ${\mathcal I}$ be the ideal generated by the product relations
in~\eqref{eq:P2-124-mult-table}. Then we have
\[
NK^{\Gamma}_{S^1}(\C^3) \cong \Z[u,u^{-1}][\alpha_0, \alpha_1,
  \alpha_2, \alpha_3] \bigg/ {\mathcal I} + \langle \alpha_0 -1
  \rangle.
\]
We may also explicitly compute the kernels of the $K$-theory Kirwan
maps $\kappa_s$ for \(0 \leq s \leq 3,\) according
to~\eqref{eq:kernel-Kirwan-wps}. We have
\begin{align*}
\ker(\kappa_0) & = \langle \alpha_0 (1-u^{-1})(1-u^{-2})(1-u^{-4}) \rangle, \\
\ker(\kappa_1) & = \langle \alpha_1 (1-u^{-4}) \rangle, \\
\ker(\kappa_2) & = \langle \alpha_2 (1-u^{-2})(1-u^{-4}) \rangle, \\
\ker(\kappa_3) & = \langle \alpha_3 (1-u^{-4}) \rangle. \\
\end{align*}
Let ${\mathcal J}$ be the ideal generated by $\ker(\kappa_s)$ for all
$s$, \(0 \leq s \leq 3.\) We conclude that
\begin{equation}\label{eq:orbi-Kthy-wps}
\okf(\P^2_{1,2,4}) \cong \Z[u,u^{-1}][\alpha_0, \alpha_1,
  \alpha_2, \alpha_3] \bigg/ {\mathcal I} + \langle \alpha_0 -
  1\rangle + {\mathcal J}.
\end{equation}

\def\cprime{$'$}

\end{document}